\newtheorem{theorem}{Theorem}
\newtheorem{corollary}{Corollary}[theorem]
\newtheorem{lemma}{Lemma}
\newtheorem{proposition}{Proposition}
\theoremstyle{definition}
\newtheorem{definition}{Definition}
\theoremstyle{remark}
\newtheorem{remark}{Remark}
\newcommand{\norm}[1]{\left\lVert #1 \right\rVert}
\newcommand{\innerprod}[2]{\langle #1, #2 \rangle}
\newcommand{\bx}{{ \mathbf{x} }}
\newcommand{\by}{{\mathbf{y}}}
\newcommand{\br}{{\mathbf{r}}}
\newcommand{\bn}{{\mathbf{n}}}
\newcommand{\bu}{{ \mathbf{u}}}
\newcommand{\bU}{{ \mathbf{U}}}
\newcommand{\bv}{{ \mathbf{v}}}
\newcommand{\bw}{{ \mathbf{w}}}
\newcommand{\bV}{{ \mathbf{V}}}
\newcommand{\bS}{{\mathbf{S}}}
\newcommand{\bI}{{\mathbf{I}}}
\newcommand{\bF}{{\bf F}}
\newcommand{\lb}{\label}
\newcommand{\be}{\begin{equation}}
\newcommand{\ee}{\end{equation}}
\newcommand{\bea}{\begin{eqnarray}}
\newcommand{\eea}{\end{eqnarray}}
\newcommand{\beal}{\begin{equation}\begin{aligned}}
\newcommand{\eeal}{\end{aligned}\end{equation}}
\newcommand{\bzed}{{\bf 0}}
\newcommand{\btau}{{\mbox{\boldmath $\tau$}}}
\newcommand{\bomega}{{\mbox{\boldmath $\omega$}}}
\newcommand{\barphi}{{\mbox{\boldmath $\varphi$}}}
\newcommand{\bpsi}{{\mbox{\boldmath $\psi$}}}
\newcommand{\bPsi}{{\mbox{\boldmath $\Psi$}}}
\newcommand{\btimes}{{\mbox{\boldmath $\,\times\,$}}}
\newcommand{\bdot}{{\mbox{\boldmath $\,\cdot\,$}}}
\newcommand{\bdots}{{\mbox{\boldmath $\,:\,$}}}
\newcommand{\grad}{{\mbox{\boldmath $\nabla$}}}
\newcommand{\ext}{\textbf{Ext}}
\newcommand{\red}[1]{\textcolor{black}{#1}}
\begin{document}

\title[Weak-Strong Uniqueness]{Weak-Strong Uniqueness \\ and the d'Alembert Paradox}

\author[H. Quan]{Hao Quan$^1$}

\email{$^1$haoquan@jhu.edu}

\author[G. Eyink]{Gregory L. Eyink$^2$}
\email{$^2$eyink@jhu.edu}

\address{$^1, ^2$Department of Applied Mathematics \& Statistics The Johns Hopkins University, Baltimore, MD 21218, USA}
\address{$^2$Department of Physics and Astronomy The Johns Hopkins University, Baltimore, MD 21218, USA}


\keywords{}

\date{\today}

\dedicatory{}

\begin{abstract}
We prove conditional weak-strong uniqueness of the potential Euler solution for external flow around a smooth body in three space dimensions, within the class of viscosity weak solutions with the same initial data. Our sufficient condition is the vanishing of the 
streamwise component of the skin friction \red{integrated over the surface}
in the inviscid limit, \red{slightly stronger than the condition of Kelliher} 
\red{and weakening that of Bardos-Titi}, both for bounded domains. Because global-in-time existence of the smooth potential solution leads back to the d'Alembert paradox, we argue that weak-strong uniqueness is not a valid criterion for ``relevant'' notions of generalized Euler solution and that our condition is likely to be violated in the inviscid limit. 
We prove also that the Drivas-Nguyen condition on uniform continuity at the wall of the normal velocity component implies weak-strong uniqueness within the general class of admissible weak Euler solutions in bounded domains.  

\smallskip
\noindent \textbf{Keywords}: Weak-strong uniqueness, D'Alembert paradox, inviscid limit, dissipative weak Euler solution 
\end{abstract}
\maketitle

\section{Introduction}

The concept of weak-strong uniqueness in the theory of partial differential equations (PDE's) arose 
in the work of Leray \cite{leray1934mouvement}, Prodi \cite{prodi1959teorema}, and Serrin \cite{Serrin1963} 
for the incompressible Navier-Stokes equations. Weak-strong uniqueness for a PDE can be expressed 
by the statement that ``If there exists a strong solution, then any weak solution with 
the same initial data coincides with it'',  as succinctly summarized in the recent review \cite{wiedemann2018weak}. 
This same review also emphasized the important role that weak-strong uniqueness has come to play in 
the theory of incompressible Euler equations, especially in the formulation of ``relevant'' notions 
of generalized solutions. Indeed, standard weak or distributional solutions of Euler equations need  
not arise as inviscid limits of Navier-Stokes solutions, so that more general
notions have been proposed, such as the ``measure-valued Euler solutions'' of DiPerna-Majda\cite{diperna1987oscillations}. 
While these  measure-valued solutions are guaranteed to exist as inviscid limits, Lions \cite{lions1996mathematical} 
in particular was critical of them, arguing that ``the relevance of this notion is not entirely clear since 
it is not known that `solutions' in the sense of \cite{diperna1987oscillations} coincide with smooth solutions 
as long as the latter do exist.'' Lions \cite{lions1996mathematical} thus proposed his own notion of 
``dissipative Euler solutions'', which are likewise guaranteed to exist as inviscid limits but which were designed
to have in addition the weak-strong uniqueness property. Lions' theory has had important applications 
to turbulence theory, providing a proof that finite-time blow-up of smooth Euler solutions is necessary to 
explain anomalous energy dissipation that might arise from smooth initial data in three-dimensional periodic domains\cite{eyink2008dissipative,bardos2013mathematics}, for example the Taylor-Green vortex initial data 
\cite{fehn2022numerical}. 

Weak-strong uniqueness cannot hold unconditionally, as shown already by the early examples 
of non-unique weak Euler solutions constructed by Scheffer \cite{scheffer1993inviscid} and Shnirelman 
\cite{shnirelman1997nonuniqueness} with compact space-time support. The modest ``admissibility condition''
\be \frac{1}{2}\int_\Omega |\bu(\bx,t)|^2 \,dV\leq \frac{1}{2}\int_\Omega |\bu(\bx,0)|^2 \, dV, 
\quad t\geq 0 \lb{admiss} \ee 
assures that any standard weak Euler solution for $\Omega={\mathbb R}^n$ or ${\mathbb T}^n$ with $n\geq 2$ 
is a dissipative solution in the sense of Lions and thus satisfies weak-strong uniqueness 
\cite{lions1996mathematical,eyink2008dissipative,bardos2013mathematics}. In fact, a generalization of this simple 
admissibility condition has been shown to imply weak-strong uniqueness also for measure-valued 
Euler solutions on space domain $\Omega={\mathbb R}^n$ or ${\mathbb T}^n$ \cite{brenier2011weak}. 
The situation is not as simple for domains with a non-empty boundary, $\partial\Omega\neq\emptyset.$
Using convex integration methods, a piecewise smooth, stationary Euler solution in a 2D annular domain 
was shown to co-exist with infinitely-many admissible weak Euler solutions for the same initial data 
\cite{bardos2014non}. More recently, similar methods were applied to show that the analogous result 
holds for plug flow, with space-time constant streamwise velocity $\bU$ in a 3D plane-parallel 
channel, which coexists with infinitely many admissible weak Euler solutions with the same initial 
condition that exhibit separation at the boundary \cite{vasseur2023boundary}. 
It was proved on the other hand by Bardos \& Titi in \cite{bardos2013mathematics}
that weak-strong uniqueness holds in the class of inviscid limits for such wall-bounded flows 
if some additional conditions are assumed, such as vanishing skin friction or the condition of Kato 
\cite{kato1984remarks} on vanishing dissipation in a shrinking neighborhood of the boundary. 
\red{Kelliher in \cite{kelliher2017observations} has shown for inviscid limits in 2D 
and formally in 3D that vanishing streamwise component of skin friction, integrated over
the surface, implies weak-strong uniqueness in bounded domains, and he further relates 
these conditions to those of  Bardos \& Titi in \cite{bardos2013mathematics}.}
More generally,
it was shown in \cite{bardos2014non} that weak Euler solutions in a bounded domain satisfy the weak-strong
uniqueness property if, in addition to the admissibility condition \eqref{admiss}, they possess
also H\"older regularity of class $C^\alpha$ for some $\alpha>0$ in a neighborhood of the boundary.
Paper \cite{wiedemann2018weak} has further reduced this additional requirement for weak-strong 
uniqueness to just continuity in a neighborhood of the boundary. 

There are several possible views about the physical relevance of such conditional weak-strong uniqueness 
results. One view is that these theorems provide additional conditions for ``physical'' weak
Euler solutions in domains with boundaries. However, in our opinion, such a view unjustifiably assumes 
that Nature will prefer a smooth Euler solution, whenever that exists. A possible counterexample is 
the potential Euler solution for flow around a body, which was shown by d'Alembert 
\cite{dalembert1749theoria,dalembert1768paradoxe} to produce no drag.  Substantial empirical evidence 
exists, on the other hand, that drag around solid bodies does not vanish even in the limit of infinite 
Reynolds number \cite{eyink2024onsager}. In particular, the famous problem of an impulsively accelerated 
disk proposed by Prandtl \cite{prandtl1925magnuseffekt} corresponds to solving the Navier-Stokes equations 
with the potential Euler flow of d'Alembert as initial data, but the latest high-Reynolds number simulations 
of \cite{chatzimanolakis2022vortex} show no obvious tendency for the Navier-Stokes solutions to 
converge to the stationary potential Euler flow. If weak-strong uniqueness were to hold, then there
would be a possible contradiction with theorems that guarantee strong convergence of inviscid limits 
to dissipative weak Euler solutions (e.g. see the review in \cite{drivas2019remarks}). It 
has been argued instead in 
\cite{drivas2018nguyen,quan2025weakb} that the more likely scenario is that the conditions 
for weak-strong uniqueness fail for the inviscid limit in wall-bounded flows and that dissipative weak 
Euler solutions obtained as inviscid limits and the smooth potential Euler solution can thus co-exist,
with the same initial data.
Because the smooth potential solution of d'Alembert exists globally in time, there is no 
possibility to explain the observations by finite-time blow-up of the smooth Euler solution, contrary to  
what has often been suggested for periodic domains (\cite{frisch1995turbulence},\S 7.8). An even clearer numerical 
example of the scenario proposed in \cite{quan2025weakb} is provided by the problem of a vortex 
dipole in 2D impinging on a flat wall \cite{orlandi1990vortex}. 
Although smooth Euler solutions exist globally in time in 2D, 
numerical simulations of \cite{nguyenvanyen2018energy} show no tendency for the high Reynolds 
Navier-Stokes solutions to converge to the smooth Euler solution with the same initial data. 
Furthermore, the numerical evidence of \cite{nguyenvanyen2018energy} is consistent with
non-vanishing skin friction and with anomalous energy dissipation near the wall, so that neither of the 
conditions established by Bardos \& Titi in \cite{bardos2013mathematics} for weak-strong uniqueness of inviscid limits appears 
to be valid for this flow. 

The possible paradox on the inviscid limit for an accelerated body is not yet sharp because, 
to our knowledge, no existing theorem on weak-strong uniqueness applies to the d'Alembert flow.
For example, the proofs of \cite{bardos2013mathematics} carry through for flows in exterior 
domains but they consider inviscid limits of Leray weak Navier-Stokes solutions with finite 
total energy, whereas the solutions involved in the accelerated body have infinite energy 
in the rest frame of the body. Our principal goal in this paper is therefore to prove a 
conditional weak strong-uniqueness result for strong inviscid limits of external flow around a body 
with initial data that converges strongly in $L^2_{loc}$ to the potential Euler solution.
The principal tool that we employ in our proof is the Josephson-Anderson relation recently 
derived in \cite{eyink2021josephson,eyink2021jaerratum} for such flows in the body frame 
and rigorously proved in \cite{quan2024onsager} to remain valid for strong inviscid limits. 
Our proof is a version of a standard relative energy argument \cite{wiedemann2018weak}
and it yields weak-strong uniqueness under a condition \red{of vanishing integrated streamwise skin friction, which is the analogue of the condition established by Kelliher \cite{kelliher2017observations} for bounded domains.} 
It was shown in \cite{quan2024onsager} that 
the skin friction in fact vanishes in the sense of distributions under a condition introduced by 
Drivas \& Nguyen in \cite{drivas2018nguyen} to study anomalous energy dissipation, which involves uniform continuity 
of only the normal component of the velocity and only at the boundary itself. This is 
weaker than the continuity conditions invoked in \cite{bardos2014non} and \cite{wiedemann2018weak}
to prove weak-strong uniqueness for admissible weak Euler solutions in bounded domains. 
For comprehensiveness, we prove also that the less restrictive conditions of \cite{drivas2018nguyen} suffice 
to derive the weak-strong uniqueness results of \cite{bardos2014non} and \cite{wiedemann2018weak}. 

In the remainder of this paper we first give precise statements of the theorems outlined above.
Thereafter we present the proofs. For more complete discussion of 
physical context and implications, we refer the reader to \cite{quan2025weakb}.

\section{Statement of the Main Results}\label{sec:prelim} 
Let $\Omega\subset\mathbb{R}^3$ be a domain with a $C^\infty$ boundary $\partial\Omega$. Recall that the incompressible Euler equations are
\begin{equation}
    \begin{aligned}
        \partial_t \bu + \grad\bdot(\bu\otimes\bu) + \grad p &= 0\quad \text{on } \Omega\times(0,T)\\
        \grad\bdot\bu &= 0\quad \text{on } \Omega\times(0,T)
    \end{aligned}\label{eq:euler}
\end{equation}
with initial data
\begin{equation}
    \bu|_{t=0} = \bu_0 \quad \text{in } \Omega
\end{equation}
and no-flow-through boundary condition
\begin{equation}
    \bu\bdot\bn = 0\quad\text{on }\partial\Omega\times(0,T)\label{eq:no-flow-through-cond}
\end{equation}
Here $T>0$ is a finite time, 
$\bu:\Omega\times[0,T)\to\mathbb{R}^3$ is the velocity, 
$p:\Omega\times[0,T)\to\mathbb{R}$ is the pressure, 
$\bu_0$ is the initial velocity, 
and $\bn$ is the outward-pointing unit normal to the boundary of $\Omega$.
In order to distinguish vector functions from scalar functions and to simplify notations, we use boldface symbols to denote the former and omit codomains in the notations for space of vector functions.

Consider flow past a compact and smooth solid body $B\subset\mathbb{R}^3$ but with a smooth far-field velocity $\bV\in C^{\infty}([0,\infty))$ which may vary over time. In this case, the fluid domain $\Omega = \mathbb{R}^3\setminus B$ is unbounded with a compact boundary $\partial\Omega = \partial B$ (see Figure \ref{fig:external-flow}) and the Euler equations \eqref{eq:euler} are supplemented with a condition on the far field asymptotic velocity:
\begin{equation}
    \bu(\bx, t)\sim \bV(t)\quad \text{as } |\bx|\to\infty\label{eq:far-distance-vel}
\end{equation}
Then the potential flow solution $\bu_{\phi} = \grad\phi$ of the Euler equation \eqref{eq:euler} is given by the velocity potential $\phi$, which is the solution of the Neumann problem of the Laplace equation
\beal
    \Delta\phi = 0 \quad &\text{in }\Omega\\
    \frac{\partial\phi}{\partial n} = 0 \quad &\text{on }\partial B\\
    \phi(\bx,t) \sim \bV(t)\bdot\bx \quad &\text{as } |\bx|\to \infty\label{eq:neumann-problem}
\eeal
for any $t\in[0,\infty)$.
By classical theory of elliptic equations, one has $\phi(t)\in C^{\infty}(\bar\Omega)$ unique up to a spatial constant
(see Section 2 of \cite{quan2024onsager}). Therefore, we deduce that $\bu_\phi\in C^{\infty}(\bar\Omega\times[0,T])$. Furthermore, pressure is given by the unsteady Bernoulli equation:
\begin{equation}
    \partial_t\phi + \frac{1}{2}|\bu_\phi|^2 + p_\phi = C(t)\label{eq:bernoulli}
\end{equation}
for some smooth function $C$ which varies over time so that \eqref{eq:euler} hold for $\bu_\phi$. The total force $\mathbf{F}_\phi$ exerted by the potential flow $\bu_\phi$ on the body $B$ is given instantaneously by the surface integral
\begin{equation}
    \mathbf{F}_\phi(t) \coloneqq -\int_{\partial B}p_\phi(t)\bn\;dS\label{eq:total-force}
\end{equation}
where $\bn$ is the outer normal of the body $B$ pointing into the fluid domain $\Omega$.
In the case of d'Alembert \cite{dalembert1749theoria, dalembert1768paradoxe} with constant far-field velocity $\bV(t) = \bV$ for any $t > 0$ and stationary potential flow $\bu_\phi$, the total force on the body vanishes identically, $\bF_\phi\equiv 0$. This result can be generalized to the unsteady potential flow if averaged over a long enough time. Specifically,
\begin{proposition}
    Consider a solid body $B\subset\mathbb{R}^3,$ represented by a simply connected $C^{\infty}$ manifold with vanishing genus/first Betti number and a compact boundary.
    Let $\bu_\phi$ be the unique potential flow solution of the incompressible Euler equations \eqref{eq:euler} in $\Omega=\mathbb{R}^3\setminus B$ that satisfies no-flow-through condition \eqref{eq:no-flow-through-cond} and has velocity $\bV\in C^{\infty}([0,\infty))$ at infinity. If 
    $\bV$ is globally bounded, then the long-time average of the total force $\mathbf{F}_\phi$ given by \eqref{eq:total-force} must vanish
    \begin{equation}
        \langle\mathbf{F}_\phi\rangle = \lim_{T\to\infty}\frac{1}{T}\int_0^T \mathbf{F}_\phi(t)\;dt = 0
    \end{equation}
    Furthermore, the power dissipated by drag ${\mathcal W}_\phi(t):=\mathbf{F}_\phi(t)\bdot\bV(t)$ also has zero long-time 
    average:
    \begin{equation}
        \langle{\mathcal W}_\phi\rangle = \lim_{T\to\infty}\frac{1}{T}\int_0^T {\mathcal W}_\phi(t)\;dt = 0
    \end{equation}
\end{proposition}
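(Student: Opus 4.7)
The starting point is the linearity of the Neumann problem~\eqref{eq:neumann-problem}. Because the Laplacian and the wall condition $\partial\phi/\partial n=0$ on $\partial B$ are time-independent while the asymptotic data depend linearly on $\bV(t)$, uniqueness of the exterior potential (which is single-valued thanks to the simple connectedness of $B$ and its vanishing first Betti number) yields the decomposition
$$\phi(\bx,t)=\sum_{i=1}^{3}V_i(t)\,\varphi_i(\bx),$$
where each $\varphi_i\in C^\infty(\bar\Omega)$ is the unique-up-to-constant harmonic function satisfying $\partial_n\varphi_i=0$ on $\partial B$ and $\varphi_i(\bx)\sim x_i$ as $|\bx|\to\infty$. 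Writing $\psi_i:=\varphi_i-x_i$ for the disturbance potential, one has $\psi_i=O(|\bx|^{-2})$ and $\grad\psi_i=O(|\bx|^{-3})$ in $\mathbb{R}^3$ by the classical multipole expansion, since $\int_{\partial B}\partial_n\psi_i\,dS=-\int_{\partial B}n_i\,dS=0$ forces the monopole term to vanish.

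Substituting Bernoulli~\eqref{eq:bernoulli} into the force formula~\eqref{eq:total-force}, the scalar $C(t)$ drops thanks to $\int_{\partial B}\bn\,dS=0$ and one is left with
$$\bF_\phi(t)=\int_{\partial B}\partial_t\phi\,\bn\,dS+\frac{1}{2}\int_{\partial B}|\bu_\phi|^2\,\bn\,dS.$$
Freezing $\bV$ to an arbitrary constant $\bU\in\mathbb{R}^3$ produces a stationary potential flow, to which d'Alembert's classical paradox applies with vanishing force; since $\partial_t\phi$ also vanishes in that case, the quadratic form $\frac{1}{2}\sum_{i,j}U_iU_j\int_{\partial B}(\grad\varphi_i\bdot\grad\varphi_j)\bn\,dS$ must equal zero for every $\bU$. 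The vector-valued kernel is symmetric in $(i,j)$, so polarization forces each coefficient to vanish and the quadratic integral is zero for every time-dependent $\bV(t)$. Consequently the force reduces to the linear-in-$\dot\bV$ expression
$$\bF_\phi(t)=\sum_{i=1}^{3}\dot V_i(t)\,\ba_i,\qquad \ba_i:=\int_{\partial B}\varphi_i\,\bn\,dS\in\mathbb{R}^3.$$

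For the long-time averages, the force identity yields $\tfrac{1}{T}\int_0^T\bF_\phi\,dt=\sum_i\ba_i\,(V_i(T)-V_i(0))/T\to 0$ as $T\to\infty$ by the assumed global boundedness of $\bV$. For the power, define $M_{ji}:=(\ba_j)_i$, so that $\mathcal{W}_\phi(t)=\sum_{i,j}M_{ji}V_i(t)\dot V_j(t)$. The $3\times 3$ matrix $M$ is the classical added-mass tensor and is symmetric: applying Green's second identity on $\Omega\cap B_R$ to the decaying disturbances $\psi_i$ and $\psi_j$, using $\partial_n\psi_i=-n_i$ on $\partial B$ and letting $R\to\infty$ (the spherical remainder vanishing thanks to the decay estimates above), one obtains $M_{ji}=M_{ij}$. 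Symmetry then gives the exact differential $\mathcal{W}_\phi=\tfrac{1}{2}\tfrac{d}{dt}(\bV^\top M\bV)$, whose antiderivative is uniformly bounded by $\|\bV\|_\infty^2\|M\|$, so its Ces\`aro average vanishes as $T\to\infty$. The principal technical point is the careful treatment of the surface integrals through the decaying disturbances $\psi_i$ rather than $\varphi_i$ itself, together with the invocation of d'Alembert's classical paradox; once these are in hand, the remainder of the argument is purely algebraic.
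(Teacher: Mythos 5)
Your proof is correct, and its skeleton matches the paper's: both arguments exhibit $\mathbf{F}_\phi$ and $\mathcal{W}_\phi$ as exact time derivatives of quantities bounded uniformly in time (an impulse-type vector and the quadratic form $\tfrac12\bV^\top M\bV$), after which the vanishing of the long-time averages follows at once from the boundedness of $\bV$. The genuine difference is in how that structure is obtained. The paper simply cites the classical identities $\mathbf{F}_\phi=-d\mathbf{I}_\phi/dt$ with $\mathbf{I}_\phi=-\int_{\partial B}\phi\,\bn\,dS$ and $\mathbf{I}_\phi=\mathbb{M}_A\bV$ (Batchelor, Lighthill), whereas you reconstruct them from first principles: Bernoulli reduces the force to $\int_{\partial B}\partial_t\phi\,\bn\,dS+\tfrac12\int_{\partial B}|\bu_\phi|^2\bn\,dS$, the steady d'Alembert paradox combined with polarization in the linear decomposition $\phi=\sum_i V_i(t)\varphi_i$ kills the quadratic surface term at every time, and the symmetry of $M_{ji}=\int_{\partial B}\varphi_j n_i\,dS$ is proved via Green's second identity applied to the dipole-decaying disturbances $\psi_i=\varphi_i-x_i$, the decay $\psi_i=O(r^{-2})$, $\grad\psi_i=O(r^{-3})$ being correctly justified by the vanishing monopole flux so that the far-sphere contribution disappears. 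The piece $\int_{\partial B}x_j n_i\,dS=\delta_{ij}|B|$, which you leave implicit when passing from $\varphi_j$ to $\psi_j$, is symmetric by the divergence theorem, so nothing is lost there. Your route is more self-contained and also needs only symmetry and boundedness of $M$, not the positive-definiteness of the added-mass tensor invoked in the paper; its one external input, the vanishing of the force for steady potential flow with constant far-field velocity, is a classical theorem that the paper itself takes as known before stating the proposition, so there is no circularity.
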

\begin{proof}
    The total force is known to be given also by $\mathbf{F}_\phi = -d\mathbf{I}_\phi/dt$ (see e.g. \cite{batchelor1967fluid, lighthill1986informal}), the time derivative of an impulse 
    \begin{equation}
        \mathbf{I}_\phi(t) = -\int_{\partial B}\phi(t)\bn\;dS
    \end{equation}
    Since $\bV$ is bounded in time, $\|\phi(t)\|_{L^\infty(\partial\Omega)} < C$ for some constant $C$ and for all $t\ge0$. Thus, the 
    impulse $\mathbf{I}_\phi$ is also globally bounded in time and the long-time average of $\mathbf{F}_\phi$ must vanish. To see that 
    the same is true for the expended power ${\mathcal W}_\phi(t),$ we use the fact that $\mathbf{I}_\phi(t)={\mathbb M}_A\bV(t)$
    where ${\mathbb M}_A$ is a $3\times 3$ positive-definite matrix, known as the ``added mass tensor'', which depends 
    only on the set $B$ and not on time $t$ (see again \cite{batchelor1967fluid, lighthill1986informal}). In that case, 
    ${\mathcal W}_\phi(t)=\frac{d}{dt}\left( \frac{1}{2}\bV(t)^\top {\mathbb M}_A\bV(t)\right)$ and a similar 
    argument applies. 
\end{proof}

\begin{figure}
    \centering
    \includegraphics[width=0.75\linewidth]{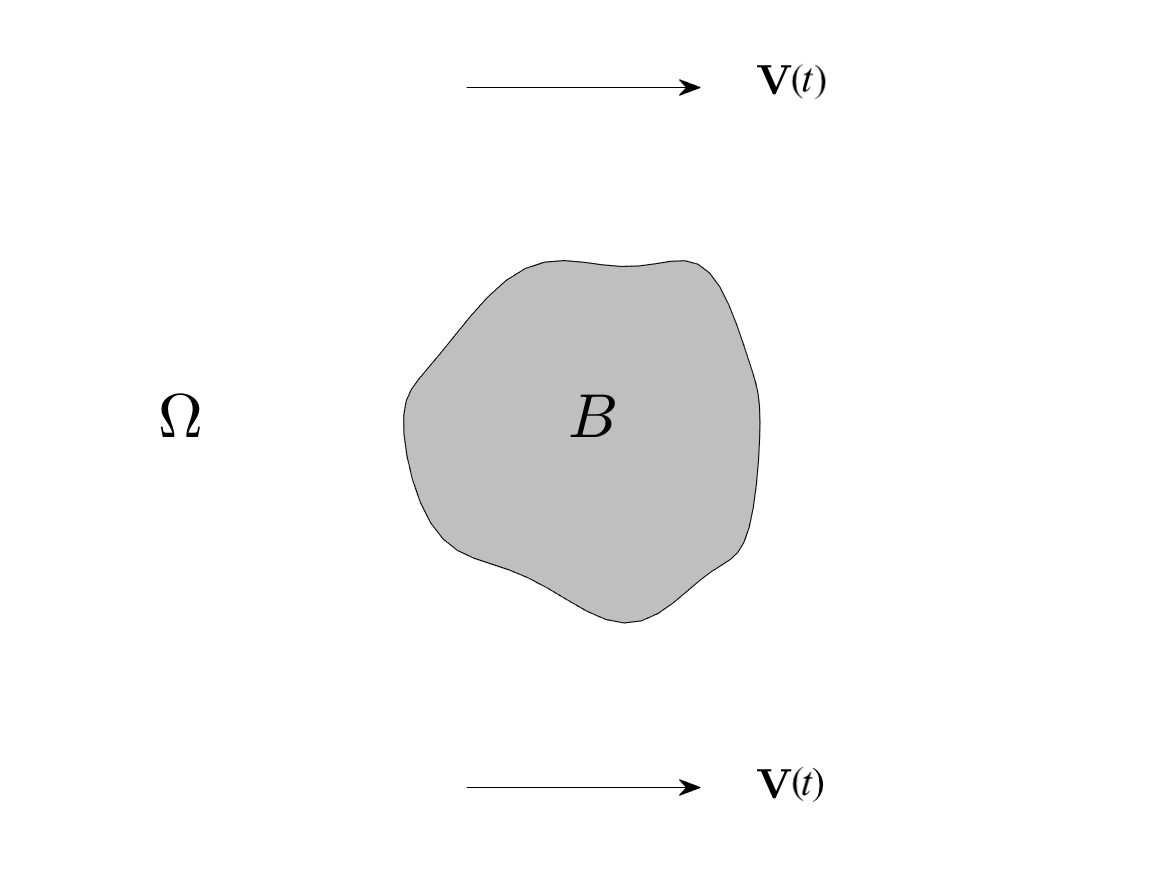}
    \caption{Flow around a finite body $B$ in an unbounded region $\Omega$ filled with an incompressible fluid moving at a velocity $\bV(t)$ at far distances.}
    \label{fig:external-flow}
\end{figure}
This result seems inconsistent with typical non-vanishing drag observed in laboratory experiments (e.g. \cite{achenbach1972experiments}) at high Reynolds numbers. As a possible resolution, we follow the approach in our previous work \cite{quan2022inertial, quan2024onsager} and study viscosity solutions of Euler equations.

\subsubsection{Prior work}\label{subsec:prior-work}
Our result relies on our previous work \cite{quan2022inertial}, \cite{quan2024onsager}, which establishes the validity of Josephson-Anderson relation for weak Euler solutions obtained in the zero-viscosity limit. 
This relation, derived in \cite{eyink2021josephson, eyink2021jaerratum} for flow past a smooth body at finite Reynolds number, equates the power dissipated by rotational fluid motions with the flux of vorticity across the flow lines of the potential Euler solution. The derivation starts with an assumed strong Navier Stokes solution $\bu^{\nu}$ satisfying
\begin{align}
    \partial_t\bu^\nu + \grad\bdot(\bu^\nu\otimes\bu^\nu) + \grad p^\nu &= - \nu\grad\times\bomega^\nu,\quad  \grad\bdot\bu^\nu = 0 &\text{on }\Omega\times(0,T)\label{eq:NS}\\
    \bu^\nu &= \bzed &\text{on } \partial\Omega\times(0,T)\label{eq:no-slip}
\end{align}
and \eqref{eq:far-distance-vel} at far distance.
The next step of the derivation involves decomposing $\bu^{\nu}$ into the background potential flow solution $\bu_\phi = \nabla\phi$ and a solenoidal field $\bu_\omega^{\nu}$, which corresponds to the rotational wake behind the body,
as follows:
\be \bu^\nu=\bu_\phi+\bu^\nu_\omega. \lb{decomp} \ee
The field $\bu_\omega^\nu$ satisfies the following equation that expresses local conservation of vortex momentum
\begin{equation}
    \partial_t\bu_\omega^\nu
    + \grad\bdot (\bu_\omega^\nu\otimes\bu_\omega^\nu+\bu_\omega^\nu\otimes\bu_\phi+\bu_\phi\otimes\bu_\omega^\nu) 
    +\grad p_\omega^\nu 
    =- \nu\grad\btimes\bomega^\nu, \lb{NS-omega-mom2} 
\end{equation}
subject to the boundary condition $\bu_\omega^{\nu} = -\bu_\phi$ on $\partial B$ and the initial condition $\bu_\omega^{\nu}(0) = \bu^{\nu}_0-\bu_\phi(0).$ The pressure $p_{\omega}^{\nu}$ is to be determined by the divergence-free constraint $\grad\cdot\bu_{\omega}^{\nu} = 0$. In that case, the total drag force on the body is a 
sum $\mathbf{F}^\nu(t)=\mathbf{F}_\phi(t)+\mathbf{F}^\nu_\omega(t),$ consisting of a potential part given by \eqref{eq:total-force} 
and a rotational part given by
\begin{equation}
    \mathbf{F}_\omega^\nu(t) \coloneqq \int_{\partial B}[-p_\omega\bn + 2\nu\bS\bdot\bn]\;dS. 
\end{equation}
The Josephson-Anderson relation states that the power transmitted to rotational motions 
${\mathcal W}_\omega^\nu(t):=\mathbf{F}_\omega^\nu(t)\bdot\bV(t)$ is given instantaneously by 
\be {\mathcal W}_\omega^\nu(t)= -\int_\Omega \mathbf{u}_\phi\bdot(\mathbf{u}_\omega^\nu\btimes\bomega^\nu
-\nu\grad\btimes\bomega^\nu)\, \, dV, \ee 
so that the total power expended is given by 
${\mathcal W}^\nu(t)= {\mathcal W}_\phi(t) + {\mathcal W}_\omega^\nu(t)$.

As in \cite{quan2024onsager}, we assume that the vortex momentum equations admit 
strong solutions $\bu^{\nu}_\omega$ for arbitrarily large Reynolds numbers. 
We also assume that for $q\geq 2,$
$(\mathbf{u}^{\nu}_\omega)_{\nu>0}$ converges strongly to $\mathbf{u}_\omega$ in $L^q((0,T),L_{\text{loc}}^q(\bar{\Omega}))$:
    \be
        \mathbf{u}^{\nu}_\omega\xrightarrow[L^q((0,T),L_{\text{loc}}^q(\bar{\Omega}))]{\nu\to0}\mathbf{u}_\omega. \label{u-omega-L2Conv}
    \ee    
and for $q\geq 1$ that $(p^{\nu}_\omega)_{\nu>0}$ converges strongly to $p_\omega$ in $L^q((0,T),L_{\text{loc}}^q(\bar{\Omega}))$. The notion of convergence in $L_{\text{loc}}^p(\bar\Omega)$ is essentially convergence in $L^p$ locally in the interior of $\Omega$, plus uniform boundedness in a neighborhood of $\partial\Omega$. 
See Section \ref{sec:proof-thm-1} for the precise definition.
In this case, the limit solves
the inviscid version of Eq.\eqref{NS-omega-mom2}:
\begin{equation}
    \partial_t\bu_\omega
    + \grad\bdot (\bu_\omega\otimes\bu_\omega+\bu_\omega\otimes\bu_\phi
    +\bu_\phi\otimes\bu_\omega) 
    + \grad p_\omega = 0, \quad\grad\bdot\bu_\omega=0 \label{E-omega-mom2} 
\end{equation}
subject to initial value $\bu_\omega(\cdot,0) \equiv \lim_{\nu\to}\bu_\omega^\nu(\cdot, 0)$, \textit{in the sense of distribution}:
\beal
    \int_0^{T}\int_{\Omega}\partial_t\barphi\bdot\bu_\omega 
    &+ \int_{\Omega} \bu_\omega(\bx, 0)\bdot\barphi(\bx, 0)\, dV \\
    &=-\int_0^T\int_\Omega\grad\barphi :(\bu_\omega\otimes\bu_\omega+\bu_\omega\otimes\bu_\phi
    +\bu_\phi\otimes\bu_\omega) \, dV dt
    \label{eq:u-omega-weak-form}
\eeal
for every $\barphi\in C_c^{\infty}(\Omega\times[0,T))$ with $\grad\bdot\barphi = 0$. 

The kinetic energy of the rotational motions is expected to be globally finite uniformly in Reynolds number, 
i.e. $\bu_\omega^\nu\in L^2(0,T;L^2(\Omega))$ uniformly in $\nu.$ An asymptotic multipole 
expansion shows  that $\bu_\omega^\nu$ is a dipole to leading order 
and decays as $|\bu_\omega^\nu|=O(r^{-3})$ for $r=|\bx|\to\infty$ \cite{eyink2021josephson,eyink2021jaerratum}. 
This decay can expected to remain true as $\nu\to 0$ because the dipole moment is the fluid impulse
$\bI_\omega^\nu(t),$ which should have an inviscid limit $\bI_\omega(t)$ whose time-derivative is 
${\bf F}_\omega(t)$. A basic assumption of \cite{quan2024onsager}, strengthening \eqref{u-omega-L2Conv},  
was that
    \be
        \mathbf{u}^{\nu}_\omega\xrightarrow[L^2((0,T),L^2(\Omega)]{\nu\to0}\mathbf{u}_\omega, \label{u-omega-L2Conv-glob}
    \ee    
a condition required for the rigorous derivation of the Josephson-Anderson relation in the inviscid limit. 
Formulation of similar assumptions for the rotational pressure $p_\omega^\nu$ requires more careful discussion. 
Despite representing rotational motions, nevertheless $\bu_\omega\sim \grad \phi_\omega$ as $r\to\infty$
because the asymptotic dipole field is potential.
In that case, the pressure $p_\omega$ is expected to be given asymptotically by the Bernoulli relation and 
the leading-order contribution is $p_\omega\sim -\partial_t\phi_\omega=O\left(r^{-2}\right)$ as $r\to\infty.$
See \cite{wu1981theory,lighthill1986informal,eyink2021jaerratum}. 
The pressure of the rotational flow in the inviscid limit can thus be expected to satisfy 
$p_\omega\in L^q(0,T;L^q(\Omega))$ only for $q>3/2.$ These physical expectations are 
incorporated into the definitions and theorem statements below. 

In particular, we shall say that $\bu_\omega$ is a finite-energy weak solution of the ideal vortex-momentum equation \eqref{E-omega-mom2} if $\bu_\omega\in L^2(0,T;H(\Omega))$ and satisfies \eqref{eq:u-omega-weak-form}. Here $H(\Omega)$ is the function space of solenoidal vector fields, defined by the $L^2$ completion of the space
    $\left\{\bv\in C_c^{\infty}(\Omega): \grad\bdot \bv = 0\right\}$
such that any vector field in $H(\Omega)$ satisfies both the divergence-free condition in the distributional sense and the no-flow through condition in the trace sense in $H^{-1/2}(\partial\Omega);$ see e.g. Theorem III.2.3. in \cite{galdi2011introduction}. We summarize these properties in the following equivalent formulation of $H(\Omega)$:
\begin{equation}
    H(\Omega)= \left\{\bv\in L^2(\Omega): \grad\bdot \bv = 0\mbox { in }\Omega,\;\; \bv\bdot\bn = 0\mbox { on }\partial\Omega\right\}\label{eq:characterization-H-Omega-2}
\end{equation}
Another convenient characterization of $H(\Omega)$ (see Chap. III of \cite{galdi2011introduction} or Section 1.6 of \cite{tsai2018lectures}) is 
\begin{equation}
    H(\Omega) =\left\{\bv\in L^2(\Omega): \int_{\Omega}\bv\bdot\grad \psi \, dV  = 0, \;\;\;
    \forall \psi \in W_{\text{loc}}^{1,2}(\Omega) \text{ s.t. } \grad \psi\in L^2(\Omega)\right\}\label{eq:characterization-H-Omega-1}
\end{equation}
These definitions of $H(\Omega)$ require $\Omega$ to be only a locally Lipschitz domain, which can bounded or unbounded. 
One of the new conclusions of the present work is that, under the assumption \eqref{u-omega-L2Conv-glob},
the inviscid limit $\bu_\omega$ is a finite-energy weak solution of the ideal equations \eqref{E-omega-mom2}, in the sense discussed above. 

An advantage of the decomposition \eqref{decomp} is that one obtains 
as easy corollaries corresponding results for the inviscid limit 
of the full fields $\bu^\nu,$ $p^\nu.$
The assumptions \eqref{u-omega-L2Conv} imply that for $q\geq 2$, $(\mathbf{u}^{\nu})_{\nu>0}$ converges strongly to $\mathbf{u}$ in $L^q((0,T),L_{\text{loc}}^q(\bar{\Omega}))$:
\be
    \mathbf{u}^{\nu}\xrightarrow[L^q((0,T),L_{\text{loc}}^q(\bar{\Omega}))]{\nu\to0}\mathbf{u}. \label{L2Conv}
\ee    
and for $q\geq 1$ that $(p^{\nu})_{\nu>0}$ converges strongly to $p$ in $L^q((0,T),L_{\text{loc}}^q(\bar{\Omega}))$. Furthermore, the inviscid limits 
$\bu$ solve the Euler equation in the sense of distributions:
\beal
    \int_0^{T}\int_{\Omega}\partial_t\barphi\bdot\bu 
    + \grad\barphi :(\bu\otimes\bu) \, dV dt
    + \int_{\Omega} \bu(\bx, 0)\bdot\barphi(\bx, 0)\, dV
    = 0
    \label{eq:u-weak-form}
\eeal
for divergence-free $\barphi.$ Under the stronger 
assumption \eqref{u-omega-L2Conv-glob}, 
$\bu\in L^2(0,T; H_{\text{loc}}(\Omega))$, where
\begin{equation}
    H_{\text{loc}}(\Omega)\coloneqq \{\mathbf{v}\in L^2_{\text{loc}}(\Omega): \grad\bdot \mathbf{v} = 0\mbox{ in }\Omega, \mathbf{v}\bdot\bn = 0\mbox{ on }\partial\Omega\}
\end{equation}
These statements parallel those obtained for weak solutions in bounded domains.

Invoking the assumptions \eqref{u-omega-L2Conv}, Theorem 1 of \cite{quan2022inertial} showed that the distributional limit of the wall shear stress $\btau_w^{\nu} = 2\nu\bS^{\nu}\bn = \nu\bomega\times\bn$ at the boundary exists for $\nu\to0$:
\begin{equation}
    \btau^{\nu}_w \xrightarrow{\nu\to0} \btau_w \text{ in } D'((\partial B)_T,\mathcal{T}(\partial B)_T) \lb{tau-lim}
\end{equation}
This result was only established in \cite{quan2022inertial} for the open time 
interval $(0,T),$ but we extend that result here to include the initial data.
More precisely, we will show that this limit exists as a distributional section of the tangent bundle of the space-time manifold $(\partial B)_T=\partial B\times [0,T)$, where we 
assume, as in \cite{quan2022inertial}, that $B\subset {\mathbb R}^3$ is closed, bounded, and has boundary $\partial B=\partial \Omega$, which is a $C^\infty$ manifold 
embedded in ${\mathbb R}^3.$ Of course, $\partial(\partial B)=\emptyset,$ but now 
$(\partial B)_T$ has a boundary $\partial B\times\{0\}$. See Section 2 of \cite{quan2022inertial} for notations and conventions 
regarding distribution theory on manifolds.

Since $\bu_\phi\bdot\bn=0$ on $\partial B$, and since 
$\partial B$ is compact, it follows that we may interpret $\bu_\phi|_{\partial B}\in D((\partial B)_T,{\mathcal T}^*(\partial B)_T)$ as a smooth section of the cotangent bundle of $(\partial B)_T$.
Thus, the dot product with the distribution $\btau_w\in D'((\partial B)_T,{\mathcal T}(\partial B)_T)$ 
obtained by Theorem 1 of \cite{quan2022inertial} can be defined as $\bu_\phi\bdot\btau_w\in D'((\partial B)_T)$ 
by 
    \be \langle \bu_\phi\bdot\btau_w,\psi\rangle:=\langle\btau_w,\psi\bu_\phi|_{\partial B}\rangle, 
    \quad \forall \psi \in D((\partial B)_T)
\ee 
Furthermore, under assumptions \eqref{u-omega-L2Conv} strengthened by taking $q\geq 3$ for $\bu_\omega$ and $q\geq 3/2$ for $p_\omega$, we obtain that the inviscid limit of viscous dissipation $Q^{\nu} = \nu|\bomega^{\nu}|^2$ exists as a non-negative distribution, and a balance equation of kinetic energy in the rotational wake, $E_{\omega}(t)\coloneqq \frac{1}{2}\int_{\Omega}|\bu_\omega(\cdot, t)|^2\, dV,$ emerges in the inviscid limit. In order to discuss boundary terms in this energy balance, we introduce a non-standard space of test functions
\begin{equation}
    \bar{D}(\bar\Omega\times[0,T))\coloneqq\{\varphi=\phi|_{\bar\Omega\times[0,T)}: \phi\in C_c^{\infty}(\mathbb{R}^3\times\mathbb{R})\}
\end{equation}

In summary, we may state our first theorem, which extends and consolidates results from \cite{quan2022inertial, quan2024onsager}:
\begin{theorem}\label{thm:rot-energy}
Let $(\mathbf{u}^{\nu}_\omega, p^{\nu}_\omega)$ be strong solutions of Eq.\eqref{NS-omega-mom2} on $\Bar{\Omega}\times [0,T)$ for $\nu>0$. 
    Assume that for some $q\ge 3$,
    \be
        \mathbf{u}^{\nu}_{\omega}\xrightarrow[L^q(0,T;L_{\text{loc}}^q(\bar\Omega))]{\nu\to0}\mathbf{u}_\omega, \quad\quad
        \mathbf{u}^{\nu}_{\omega}(0)\xrightarrow[L_{\text{loc}}^2(\bar\Omega)]{\nu\to0}0
        \label{assumption:vel-l3-conv}
    \ee    
    and     
    \be
        p^{\nu}_{\omega}\xrightarrow[L^{\frac{q}{2}}(0,T;L_{\text{loc}}^{\frac{q}{2}}(\bar\Omega))]{\nu\to0}p_\omega\label{assumption:p-l32-conv}
    \ee
    Then the limit $(\bu_\omega,p_\omega)$ solves
    the inviscid vortex momentum equation \eqref{E-omega-mom2} in the sense of distributions
    and
    \be
    \btau^{\nu}_w \xrightarrow{\nu\to0} \btau_w \text{ in } D'((\partial B)_T,\mathcal{T}(\partial B)_T)\label{tau-lim-2}
    \ee
    Also, $Q^{\nu} = \nu|\bomega^{\nu}|^2$ converges for this subsequence to a non-negative linear functional $Q$ 
    on $\Bar{D}(\Bar{\Omega}\times [0,T))$, in the sense that $\forall\varphi\in \Bar{D}(\Bar{\Omega}\times [0,T))$,
        \begin{align}
            \lim_{\nu \to 0}  \int_0^T \int_{\Omega} \varphi\, Q^\nu \,  \, dV\, dt = \langle Q,\varphi\rangle
            \label{viscDissLimit2}
        \end{align}
    with $\langle Q,\varphi\rangle\geq 0$ for $\varphi\geq 0.$ Furthermore, an inviscid version of the balance equation for the rotational energy holds in the sense that
   for all $\varphi \in \Bar{D}(\Bar{\Omega}\times[0,T))$, $\psi = \varphi|_{\partial B}$,
    \begin{align}
    \nonumber
        &&-\int_{\Omega}\frac{1}{2}\varphi(\bx,  0)|\bu_{\omega}(0)|^2\, dV
        -\int_0^T\int_{\Omega}\frac{1}{2}\partial_t\varphi|\bu_{\omega}|^2 + 
        \grad\varphi\bdot
        \left[\frac{1}{2}|\bu_{\omega}|^2\bu+p_{\omega}\bu_{\omega}\right]\, dV dt\\
        &&\hspace{30pt} =\langle\bu_\phi\bdot \btau_w,\psi\rangle
        -\langle Q,\varphi\rangle  
        -\int_0^T\int_{\Omega}\varphi\grad\mathbf{u}_{\phi}\bdots
        \mathbf{u}_{\omega}\otimes\mathbf{u}_{\omega}\, dV dt\label{eq:rot-energy-loc}
    \end{align}
    Finally, if the convergence holds in global $L^2$
    \begin{equation}
        \bu_\omega^\nu \xrightarrow[L^2(0,T;L^2(\Omega))]{\nu\to0} \bu_\omega \label{assumption:vel-global-l2}
    \end{equation}
    then the inviscid limit $\bu_\omega$ belongs to $L^2(0,T;H(\Omega))$  and is a 
    finite-energy weak solution of \eqref{E-omega-mom2}. 
    In that case, the limiting power dissipated by drag from rotational motions, ${\mathcal W}_\omega(t)=
    \lim_{\nu\to 0} {\mathcal W}_\omega^\nu(t),$ exists and is given by an inviscid version 
    of the Josephson-Anderson relation: 
    \be {\mathcal W}_\omega(t) = -\int_\Omega \grad\bu_\phi(\cdot,t)\bdots \mathbf{u_\omega}(\cdot,t)\otimes \mathbf{u_\omega}(\cdot,t)\,
     \, dV + \int_{\partial\Omega} \mathbf{u}_\phi(\cdot,t)\bdot\btau_w(\bdot,t)\, dA\label{eq:ja-relation}
     \ee 
    which holds distributionally in time. 
\end{theorem}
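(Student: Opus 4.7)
My plan is to pass to the inviscid limit in the weak formulations of the equations for $\bu_\omega^\nu$, beginning with the momentum equation and then proceeding to the energy identity. For the momentum equation, I would test \eqref{NS-omega-mom2} against a divergence-free $\barphi \in C_c^\infty(\Omega\times[0,T))$, integrate by parts to move derivatives onto $\barphi$ (including the viscous term, which becomes $\nu\int \Delta \barphi\bdot\bu^\nu_\omega$ and vanishes as $\nu\to 0$ by the uniform bound implicit in the strong $L^q_{\mathrm{loc}}$ convergence). The nonlinear cubic and quadratic cross terms pass to the limit because $\bu_\omega^\nu \to \bu_\omega$ strongly in $L^q_{\mathrm{loc}}$ with $q\geq 3$ (so $\bu_\omega^\nu\otimes\bu_\omega^\nu \to \bu_\omega\otimes\bu_\omega$ in $L^{q/2}_{\mathrm{loc}}$) and $\bu_\phi$ is smooth; the pressure term needs only $L^{q/2}_{\mathrm{loc}}$ convergence of $p_\omega^\nu$, which is assumed. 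The initial datum term uses the $L^2_{\mathrm{loc}}$ convergence of $\bu_\omega^\nu(0)$ to $0 = \bu_\omega(0) - \bu_\phi(0) + \bu_\phi(0)$ after a translation; one must be careful that the weak form \eqref{eq:u-omega-weak-form} is recovered with the correct initial value.

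For the convergence \eqref{tau-lim-2}, I would re-examine the proof of Theorem 1 of \cite{quan2022inertial} and verify that the pairing against smooth test functions on $(\partial B)\times[0,T)$ remains well-defined when the test function does not vanish at $t=0$. Concretely, the distributional representation of $\btau_w^\nu$ in that work is obtained by testing the momentum equation against an extension of the surface test field into the bulk; the only change here is to allow test functions supported up to $t=0$, which introduces an additional boundary term $\int_\Omega \bu_\omega^\nu(0)\bdot\tilde\barphi(0)\,dV$ that converges to zero by the assumed $L^2_{\mathrm{loc}}$ convergence of $\bu_\omega^\nu(0)$.

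Next, for the viscous dissipation, $Q^\nu\geq 0$ and $\int_0^T\int_\Omega Q^\nu\,dV\,dt$ should be uniformly bounded in $\nu$ via the standard energy inequality (global boundedness on compact sets suffices for distributional convergence against $\bar D(\bar\Omega\times[0,T))$). Up to a subsequence, $Q^\nu \rightharpoonup^* Q$ as Radon measures, and positivity is preserved in the limit. The inviscid rotational energy balance \eqref{eq:rot-energy-loc} is the central calculation: testing \eqref{NS-omega-mom2} against $\varphi\bu_\omega^\nu$, integrating by parts, and using $\bu_\omega^\nu|_{\partial B}=-\bu_\phi$ yields a pre-limit identity containing a boundary term $-\int\int\varphi\bu_\phi\bdot\btau_w^\nu$, the dissipation $\int\int\varphi Q^\nu$, a cross-interaction term $\int\int\varphi\,\grad\bu_\phi\bdots\bu_\omega^\nu\otimes\bu_\omega^\nu$, and spacetime derivative terms involving $\frac{1}{2}|\bu_\omega^\nu|^2$ and $p_\omega^\nu\bu_\omega^\nu$. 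The $L^q_{\mathrm{loc}}$ strong convergence with $q\geq 3$ guarantees $|\bu_\omega^\nu|^2\bu^\nu \to |\bu_\omega|^2\bu$ in $L^1_{\mathrm{loc}}$ and, together with the $L^{q/2}_{\mathrm{loc}}$ convergence of $p_\omega^\nu$, that $p_\omega^\nu\bu_\omega^\nu$ converges in $L^1_{\mathrm{loc}}$; this reconstructs \eqref{eq:rot-energy-loc} in the limit. The main obstacle here is ensuring that Lions--Mihailescu-type commutator errors vanish, which is handled by strong (not just weak) convergence.

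Finally, under the strengthened global $L^2$ convergence \eqref{assumption:vel-global-l2}, membership $\bu_\omega(t)\in H(\Omega)$ for a.e.\ $t$ follows by passing the characterization \eqref{eq:characterization-H-Omega-1} to the limit: for any $\psi$ with $\grad\psi\in L^2(\Omega)$ one has $\int_\Omega \bu_\omega^\nu\bdot\grad\psi\,dV = 0$ (since $\bu_\omega^\nu\in H(\Omega)$), and this passes to $\int_\Omega \bu_\omega\bdot\grad\psi\,dV = 0$ by global $L^2$ convergence. The Josephson-Anderson relation \eqref{eq:ja-relation} then follows by testing the balance \eqref{eq:rot-energy-loc} against $\varphi(\bx,t)=\chi(\bx)\eta(t)$ with $\chi\equiv 1$ on an exhausting family of neighborhoods of $\partial B$, using that $\bu_\omega\in L^2(0,T;H(\Omega))$ implies the far-field boundary term vanishes, and recognizing that the total power dissipated by drag $\mathcal{W}_\omega$ equals the left-hand side of \eqref{eq:rot-energy-loc} minus the dissipation, distributionally in time. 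The most delicate step I anticipate is justifying the global-in-$\Omega$ integration by parts for \eqref{eq:ja-relation} without compactly supported test functions, which requires the $O(r^{-3})$ decay of $\bu_\omega$ inherited from the multipole expansion together with the finite-energy property.
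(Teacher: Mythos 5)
Your overall route is the same as the paper's: pass to the limit in the weak form of \eqref{NS-omega-mom2} and in the pre-limit local balance of rotational energy, allow test functions that do not vanish at $t=0$ so that an extra initial-data term appears, and obtain $\bu_\omega\in L^2(0,T;H(\Omega))$ by passing the characterization \eqref{eq:characterization-H-Omega-1} to the limit, exactly as in Section \ref{sec:proof-thm-1}. Two remarks on the initial-data step: the paper makes the pairing with test functions non-vanishing at $t=0$ rigorous by extending $\btau_w^\nu$ to a smooth section over $\partial B\times(-\delta,T)$, truncating with $\chi_{[0,T)}$ and checking independence of the extension, which your sketch glosses over; and if, as in \eqref{basicT}, one represents $\btau_w^\nu$ through the full Navier--Stokes momentum balance, the extra term converges to $\int_\Omega\barphi(\bx,0)\bdot\bu_0\,dV$ rather than to zero (it tends to zero only if one works with the rotational equation, where the hypothesis $\bu_\omega^\nu(0)\to 0$ applies).

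There is, however, one step that would fail as written: you assert that $\int_0^T\int_\Omega Q^\nu\,dV\,dt$ is uniformly bounded in $\nu$ ``via the standard energy inequality'' and then extract $Q$ by weak-$*$ compactness of Radon measures. In this exterior problem no such inequality is available: the total energy of $\bu^\nu=\bu_\phi+\bu_\omega^\nu$ is infinite since $\bu^\nu\to\bV\neq\bzed$ at infinity, and the energy balance for $\bu_\omega^\nu$ contains the indefinite boundary term $\int_{\partial B}\bu_\phi\bdot\btau_w^\nu\,dS$ --- precisely the Josephson--Anderson flux whose uniform control is the issue under study --- so no a priori uniform dissipation bound follows from \eqref{assumption:vel-l3-conv}--\eqref{assumption:p-l32-conv}, nor is one assumed. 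The paper (following \cite{quan2024onsager}) instead reads the convergence \eqref{viscDissLimit2} off the pre-limit identity \eqref{rotationTest}: under the stated hypotheses every other term converges (the boundary term via the skin-friction limit \eqref{tau-lim-2}), so $\int_0^T\int_\Omega\varphi Q^\nu\,dV\,dt$ converges as the residual; this both defines the non-negative functional $Q$ on $\bar{D}(\bar\Omega\times[0,T))$ and yields convergence of the given sequence without any further subsequence extraction. Since your own derivation of \eqref{eq:rot-energy-loc} already sets up this identity, the fix is simply to drop the a priori bound and define $Q$ as this residual. Note also that the paper obtains the inviscid Josephson--Anderson relation \eqref{eq:ja-relation} by citing Theorem 1 of \cite{quan2024onsager} under the global hypothesis \eqref{assumption:vel-global-l2}; your proposed re-derivation via cutoffs would have to justify precisely the far-field integration by parts you flag as delicate, which is where the global $L^2$ assumption and the dipole decay enter.
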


\begin{remark}
    The energy $E_{\omega}$ is also called ``relative energy'' in the PDE literature. It compares weak and strong solutions of Euler equations, 
    providing a common method for proving weak-strong uniqueness results (see \cite{wiedemann2018weak} for an overview). In fact, the proof of our next main 
    Theorem \ref{thm:external} is an example of the relative energy method, 
    which relies on Theorem \ref{thm:rot-energy}. See Section \ref{sec:proof-thm-3}
\end{remark}

\subsubsection{Main theorem statement}
We now state our main result regarding weak-strong uniqueness in the context of flow past a solid body:
\begin{theorem}\label{thm:external}
    Let $(\bu_\omega, p_\omega)$ be the limiting weak solution in Theorem \ref{thm:rot-energy} with vanishing limiting initial value $\bu_\omega(0) = \bzed$. Assume further that 
    \begin{align}
        &\bu_{\omega} \in L^{2}(0,T; L^2(\Omega)) \cap L^3(0,T; L^3(\Omega))\label{assumption:ext-vel-global}\\
        &p_{\omega} \in L^{q}(0,T; L^{q}(\Omega))
        \mbox{ for some } q\in (\frac{3}{2},2)\label{assumption:ext-p-global}
    \end{align}
    If \red{$\langle\bu_{\phi}\bdot\btau_w(\cdot,t),1\rangle:= \int_{\partial\Omega} \mathbf{u}_\phi(\cdot,t)\bdot\btau_w(\bdot,t)\, dA\equiv 0$ distributionally in time}, then 
    \begin{equation}
        \bu_{\omega}(\bx,  t) = \bzed, \text{ for a.e. } (\bx,  t) \in \Omega\times(0,T) 
    \end{equation}
    In other words, viscosity solutions $\bu$ of the Euler equations have the weak-strong uniqueness
    property that $\bu(\bx,  t) = \bu_{\phi}(\bx,  t)$ for almost every $x, t\in \Omega\times(0,T)$ when \red{$\langle \bu_\phi\bdot\btau_w(\cdot,t),1\rangle\equiv 0$} holds. 
    Furthermore, limiting power dissipated by rotational motions vanishes, ${\mathcal W}_\omega(t)\equiv 0,$ and likewise
    anomalous dissipation vanishes, $Q\equiv 0.$
\end{theorem}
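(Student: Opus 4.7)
The strategy is a relative-energy argument applied to the local energy identity \eqref{eq:rot-energy-loc} from Theorem \ref{thm:rot-energy}. The goal is to show that $E_\omega(t):=\tfrac{1}{2}\int_\Omega|\bu_\omega(\cdot,t)|^2\,dV$ satisfies an inequality of the Gronwall form $E_\omega(t)\le C\int_0^t E_\omega(s)\,ds$, from which $\bu_\omega\equiv\bzed$ follows because $\bu_\omega(0)=\bzed$. The key quantitative input is that $\bu_\phi\in C^\infty(\bar\Omega\times[0,T])$ with $\bu_\phi(\bx,t)\to \bV(t)$ as $|\bx|\to\infty$, so $\|\grad\bu_\phi\|_{L^\infty(\bar\Omega\times[0,T])}$ is finite. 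The three structural ingredients are: the vanishing initial data, which kills the $\varphi(\bx,0)$ term; the hypothesis $\langle\bu_\phi\bdot\btau_w(\cdot,t),1\rangle\equiv 0$, which kills the surface term for an appropriate choice of $\varphi$; and non-negativity of $Q$, which converts the identity into a one-sided inequality.

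Concretely, I would test \eqref{eq:rot-energy-loc} against $\varphi(\bx,s)=\chi_R(\bx)\eta_\epsilon(s)$, where $\chi_R\ge 0$ is a smooth spatial cutoff equal to $1$ on $\{|\bx|\le R\}$, supported in $\{|\bx|\le 2R\}$, with $|\grad\chi_R|\le C/R$, and $\eta_\epsilon\in C^\infty([0,T))$ is a non-negative time cutoff equal to $1$ on $[0,t-\epsilon]$, with $\partial_s\eta_\epsilon=-1/\epsilon$ on $[t-\epsilon,t]$ and $\eta_\epsilon\equiv 0$ on $[t,T)$. For $R$ large enough that $\partial B\subset\{|\bx|<R\}$, one has $\varphi|_{\partial B}=\eta_\epsilon$, so the surface term reduces to $\int_0^T\eta_\epsilon(s)\langle\bu_\phi\bdot\btau_w(\cdot,s),1\rangle\,ds=0$ by hypothesis. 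Sending $\epsilon\to 0$, the $\tfrac12\partial_t\varphi\,|\bu_\omega|^2$ term yields $\tfrac12\int_\Omega\chi_R|\bu_\omega(t)|^2\,dV$ for a.e.\ $t\in(0,T)$ by Lebesgue differentiation. Sending $R\to\infty$, the cutoff-flux contribution $\int_0^t\!\!\int_\Omega\grad\chi_R\bdot[\tfrac12|\bu_\omega|^2\bu+p_\omega\bu_\omega]\,dV\,ds$ vanishes because each flux lies in $L^1((0,T)\times\Omega)$ under the standing assumptions: $|\bu_\omega|^2\bu_\omega\in L^1$ from \eqref{assumption:ext-vel-global}; $|\bu_\omega|^2\bu_\phi\in L^1$ from $\bu_\phi\in L^\infty$ and $\bu_\omega\in L^2_tL^2_x$; and $p_\omega\bu_\omega\in L^1$ by H\"older with $q\in(3/2,2)$ paired against $\bu_\omega\in L^{q'}_tL^{q'}_x$ for $q'\in(2,3)$ interpolated between the two norms in \eqref{assumption:ext-vel-global}, so the annular mass on $\{R\le|\bx|\le 2R\}$ tends to zero.

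Dropping the non-positive term $-\langle Q,\chi_R\eta_\epsilon\rangle\le 0$ (justified since $Q\ge 0$ on non-negative test functions) and passing to the limit produces
\begin{equation*}
E_\omega(t)\ \le\ -\int_0^t\!\!\int_\Omega \grad\bu_\phi\bdots\bu_\omega\otimes\bu_\omega\,dV\,ds\ \le\ 2\|\grad\bu_\phi\|_{L^\infty}\int_0^t E_\omega(s)\,ds,
\end{equation*}
and Gronwall's inequality gives $E_\omega\equiv 0$, hence $\bu\equiv\bu_\phi$ a.e. Plugging $\bu_\omega\equiv\bzed$ back into \eqref{eq:ja-relation} and using the skin-friction hypothesis yields $\mathcal{W}_\omega(t)\equiv 0$; returning to \eqref{eq:rot-energy-loc} with $\bu_\omega\equiv\bzed$ and vanishing boundary term forces $\langle Q,\varphi\rangle=0$ for every non-negative $\varphi\in\bar{D}(\bar\Omega\times[0,T))$, i.e.\ $Q\equiv 0$. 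The main obstacle is the global spatial cutoff, which is precisely where the integrability hypotheses \eqref{assumption:ext-vel-global}--\eqref{assumption:ext-p-global} are consumed; the lower threshold $q>3/2$ on the pressure is sharp in this argument, because $p_\omega\bu_\omega\in L^1$ requires the conjugate exponent $q'<3$ to fall inside the interpolation window provided by the velocity bounds.
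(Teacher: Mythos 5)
Your proposal is essentially the paper's own proof: the same relative-energy argument applied to the rotational energy balance \eqref{eq:rot-energy-loc}, with a product space-time cutoff whose boundary trace depends only on time (so the hypothesis $\langle\bu_\phi\bdot\btau_w(\cdot,t),1\rangle\equiv 0$ kills the skin-friction term), non-negativity of $Q$ giving the one-sided inequality, the global integrability \eqref{assumption:ext-vel-global}--\eqref{assumption:ext-p-global} making the annular flux vanish as $R\to\infty$, and Gr\"onwall with $\bu_\omega(0)=\bzed$; the paper merely uses mollified characteristic functions where you use smooth cutoffs. The one step to tighten is your closing claim that $Q\equiv 0$: for a general non-negative $\varphi$ the boundary pairing $\langle\bu_\phi\bdot\btau_w,\psi\rangle$ is not known to vanish (the hypothesis controls only spatially constant traces $\psi$), but since $Q\ge 0$ you may dominate any such $\varphi$ by a test function that is constant in space near $\partial B$ and conclude $0\le\langle Q,\varphi\rangle\le 0$.
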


\begin{remark}
As mentioned earlier, our result aligns closely with the general \red{results of Bardos-Titi \cite{bardos2013mathematics} and Kelliher \cite{kelliher2017observations}.}
Theorem 4 \red{in \cite{bardos2013mathematics}} states that weak-strong uniqueness holds under the condition $\btau_w = \bzed$ for weak-* limits in $L^\infty(0, T; L^2(\Omega))$ of Navier-Stokes solutions $\bu^{\nu}$. Their result is more general in that it considers weak Leray-Hopf solutions and it does not require any \textit{a priori} assumption to obtain inviscid limits. However their result is also less general, as it assumes solutions of finite global energy. 
\red{Our condition $\langle\bu_{\phi}\bdot\btau_w(\cdot,t),1\rangle \equiv0$ 
a slight strengthening of Kelliher's for weak-strong uniqueness, proved in \cite{kelliher2017observations},  Theorem 8.1 for inviscid limits of 
weak Navier-Stokes solution in 2D bounded domains and and stated formally in 
\cite{kelliher2017observations}, Remark 8.3 
for the same situation in 3D. Note that under our assumption of strong Navier-Stokes solutions, Kelliher's Remark 8.3 follows as a rigorous theorem.} 
\end{remark}


\begin{remark}
The Drivas-Nguyen condition \eqref{cond:no-flow-through} of uniform continuity at the wall of the normal velocity was shown to imply that 
$\btau_w = \bzed$ (and hence $\bu_{\phi}\bdot\btau_w = 0$) by Theorem 3 in \cite{quan2022inertial}. This implies weak-strong uniqueness for viscosity solutions by Theorem 4.1(2) of \cite{bardos2013mathematics}. 
\end{remark}

In fact, the Drivas-Nguyen condition implies weak-strong uniqueness for general admissible Euler solutions in bounded domains, which is the statement of our next Theorem \ref{thm:internal}. To formulate it, we define the distance to the boundary $d(\bx)\coloneqq \inf_{y\in\partial \Omega}|\bx-\by|$ and the open tubular neighborhood $\Omega_\epsilon\coloneqq\{\bx\in\Omega: d(\bx)<\epsilon\}.$ It can be shown for some sufficiently 
small $\epsilon > 0$ that,  
for any $\bx\in\Omega_{\epsilon}$, there exists a unique point $\pi(\bx)\in\partial\Omega$ such that
\begin{equation}
    d(\bx) = |\bx - \pi(\bx)|,\quad \grad d(\bx) = \bn(\pi(\bx))
    :=\bn(\bx) \label{def:dist-func}
\end{equation}
where $\bn$ is the unit normal on $\partial B$ pointing into $\Omega$ and 
$\bn(\bx)$ smoothly extends $\bn$ into $\Omega_\epsilon.$
For example, see Section 14.6 in \cite{gilbarg1977elliptic}. 
We can now state: 
\begin{theorem}\label{thm:internal}
Let $\Omega\subset\mathbb{R}^n$ be a bounded, simply-connected domain with $C^{\infty}$ boundary for $n=2$ or $3$. 
Suppose that $\bU\in C^1(\bar{\Omega}\times[0,T])$ is a strong solution of incompressible Euler equations with $\bU(\cdot, 0) = \bu_0$, and
$\bu\in L^{\infty}(0,T; H(\Omega))$ is an admissible weak solution of Euler on $\Omega$ for which there exists some $\epsilon>0$ s.t.
\begin{align}
    \bu\in L^{\infty}(0,T; L^{\infty}(\Omega_{\epsilon}))\label{cond:vel-boundary-bound}
\end{align}
and for $0<\delta<\epsilon,$
\begin{align}
    \lim_{\delta\to0}\norm{\bn\cdot\bu}_{L^{\infty}(0,T; L^{\infty}(\Omega_{\delta}))} = 0\label{cond:no-flow-through}
\end{align}
Then $\bu = \bU$ for almost every $(\bx, t)\in\Omega\times(0,T)$.
\end{theorem}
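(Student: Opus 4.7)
The plan is to apply the relative-energy (Gr\"onwall) method. Define
$$E(t):=\tfrac{1}{2}\int_\Omega |\bu(t)-\bU(t)|^2\,dV,$$
so $E(0)=0$ by the shared initial data. Expanding the square and using admissibility $\|\bu(t)\|_{L^2}^2\le\|\bu_0\|_{L^2}^2$ together with energy conservation $\|\bU(t)\|_{L^2}^2=\|\bu_0\|_{L^2}^2$ for the smooth solution, the problem reduces to deriving the relative-energy identity
$$\int_\Omega \bu(\tau)\cdot\bU(\tau)\,dV - \int_\Omega |\bu_0|^2\,dV \;=\; \int_0^\tau\!\int_\Omega \big((\bu-\bU)\otimes(\bu-\bU)\big):\nabla\bU\,dV\,dt,$$
which yields $E(\tau)\le 2\|\nabla\bU\|_{L^\infty}\int_0^\tau E(s)\,ds$ and hence $E\equiv 0$ by Gr\"onwall.

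The natural route to this identity is to test the weak Euler equation for $\bu$ with $\bU$ (time-localized to $[0,\tau]$ by a mollified indicator) and substitute $\partial_t\bU=-(\bU\cdot\nabla)\bU-\nabla P$ from the strong equation; the pressure term drops by $\nabla\cdot\bu=0$ and $\bu\cdot\bn=0$, and the cross term $\int\big((\bu-\bU)\otimes\bU\big):\nabla\bU\,dV=\tfrac12\int(\bu-\bU)\cdot\nabla|\bU|^2\,dV$ vanishes by the same integration by parts. The obstacle is that $\bU$ is not itself an admissible test function, since it is not compactly supported in $\Omega$ and has non-trivial tangential trace. The standard fix is to combine a cutoff with a divergence corrector: let $\chi_\delta\in C^\infty(\bar\Omega)$ equal $0$ on $\Omega_\delta$ and $1$ outside $\Omega_{2\delta}$, and set $\bU_\delta:=\chi_\delta\bU+\mathbf{b}_\delta$, where $\mathbf{b}_\delta$ is supported in $\Omega_{2\delta}$ and solves $\nabla\cdot\mathbf{b}_\delta=-\nabla\chi_\delta\cdot\bU$ via the Bogovskii operator (solvability holds since $\int_\Omega\nabla\chi_\delta\cdot\bU\,dV=0$ by $\nabla\cdot\bU=0$ and $\bU\cdot\bn|_{\partial\Omega}=0$). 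Then $\bU_\delta\in C^\infty_c(\Omega)$ is divergence-free and is admissible; passage to $\delta\to 0$ produces the target identity modulo an error $R_\delta(\tau)$ collecting all factors of $\nabla\chi_\delta$ and $\mathbf{b}_\delta$.

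The main obstacle is showing $R_\delta(\tau)\to 0$, and this is precisely where hypotheses \eqref{cond:vel-boundary-bound}--\eqref{cond:no-flow-through} enter. Since $\chi_\delta$ depends essentially only on $d(\bx)$, one has $\nabla\chi_\delta\approx\chi_\delta'(d)\bn$, supported in the shell $\Omega_{2\delta}\setminus\Omega_\delta$ with $|\chi_\delta'|=O(1/\delta)$, so $\|\nabla\chi_\delta\|_{L^1(\Omega)}=O(1)$. The most sensitive nonlinear contribution is
$$\left|\int_0^\tau\!\int_\Omega (\bu\cdot\bU)(\bu\cdot\nabla\chi_\delta)\,dV\,dt\right|\;\lesssim\; T\,\|\bu\|_{L^\infty((0,T)\times\Omega_\epsilon)}\,\|\bU\|_{L^\infty}\,\|\bn\cdot\bu\|_{L^\infty(0,T;L^\infty(\Omega_{2\delta}))}\,\|\nabla\chi_\delta\|_{L^1},$$
which vanishes as $\delta\to 0$ directly by \eqref{cond:vel-boundary-bound}--\eqref{cond:no-flow-through}. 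The errors involving $\mathbf{b}_\delta$ are controlled using the fact that $\bU\cdot\bn$ vanishes \emph{linearly} at $\partial\Omega$ (since $\bU\in C^1(\bar\Omega)$ with $\bU\cdot\bn|_{\partial\Omega}=0$), which gives $|\nabla\chi_\delta\cdot\bU|\lesssim(1/\delta)\cdot\delta=O(1)$ on $\Omega_{2\delta}$; Bogovskii estimates then yield $\|\mathbf{b}_\delta\|_{W^{1,p}(\Omega)}\lesssim\delta^{1/p}$ for any $p<\infty$, and pairing against $\bu\in L^\infty(\Omega_\epsilon)$ extinguishes these contributions. After $\delta\to 0$, Gr\"onwall's inequality delivers $E\equiv 0$, i.e.\ $\bu=\bU$ a.e.
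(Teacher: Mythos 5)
Your overall skeleton is the same as the paper's: a relative-energy/Gr\"onwall argument in which the strong solution must be replaced by compactly supported divergence-free approximants, with the dangerous boundary-layer term $\int_0^\tau\!\int_\Omega(\bu\cdot\bU)(\bu\cdot\grad\chi_\delta)\,dV\,dt$ killed by \eqref{cond:vel-boundary-bound}--\eqref{cond:no-flow-through}; this is exactly the role of the terms $I_{n,n},I_{n,i}$ in the paper's estimate, and the cross-term cancellation you invoke is the paper's \eqref{eq:conv-end}. (The paper routes the conclusion through the identity \eqref{key} and the Lions--Bardos--Titi notion of dissipative solution up to the boundary rather than running Gr\"onwall directly, but that is cosmetic.) The substantive difference is the approximation device: the paper truncates a vector potential, $\bw_\epsilon=\grad\btimes(\chi(d/\epsilon)\bPsi)$ with $\bn\btimes\bPsi=0$ on $\partial\Omega$ (Lemma \ref{lemma:div-curl}), which is exactly divergence-free and pairs every factor $1/\epsilon$ with $|\bPsi|\lesssim d$ or $|\partial_\tau\bPsi|\lesssim d^\alpha$; you instead use a cutoff plus a Bogovskii corrector $\mathbf{b}_\delta$.

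That corrector step is where your proposal has a genuine gap. First, since you need $\mathrm{supp}\,\mathbf{b}_\delta$ to stay away from $\partial\Omega$ (otherwise $\bU_\delta$ is not an admissible test field), the divergence problem $\grad\cdot\mathbf{b}_\delta=-\grad\chi_\delta\cdot\bU$ must be solved on a region of thickness $O(\delta)$ around the shell $\{\delta<d<2\delta\}$, and the Bogovskii/inf-sup constant of such a thin domain degenerates like $\delta^{-1}$. From $\|\grad\chi_\delta\cdot\bU\|_{L^p}\lesssim\delta^{1/p}$ you therefore only get $\|\grad\mathbf{b}_\delta\|_{L^p}\lesssim\delta^{1/p-1}$, so the corrector errors such as $\int\bu\otimes\bu:\grad\mathbf{b}_\delta\,dV$ are merely $O(1)$, not $o(1)$; the claimed uniform bound $\|\mathbf{b}_\delta\|_{W^{1,p}}\lesssim\delta^{1/p}$ is unjustified as stated. (Solving instead on a fixed-thickness collar $\{\delta<d<\epsilon_0\}$ would keep the constant uniform, but then the support claim changes and the uniformity of the Bogovskii constant for this $\delta$-dependent family still needs an argument.) Second, the test field is time-dependent, so $\partial_t\mathbf{b}_\delta=B[-\grad\chi_\delta\cdot\partial_t\bU]$ enters the weak formulation and is not addressed: $\partial_t\bU\cdot\bn$ vanishes on $\partial\Omega$, but since $\bU$ is only $C^1$ this gives merely a modulus of continuity, $|\partial_t\bU\cdot\bn|\le\omega(d)$, hence $\|\grad\chi_\delta\cdot\partial_t\bU\|_{L^\infty}\lesssim\omega(2\delta)/\delta$, and combined with the thin-domain constant the term $\int\partial_t\mathbf{b}_\delta\cdot\bu\,dV$ is not shown to vanish. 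The paper's vector-potential truncation avoids both issues because the corrector is explicit, needs no elliptic estimate on a shrinking domain, and every singular factor is compensated by the linear (or H\"older) vanishing of $\bn\btimes\bPsi$ and its tangential derivatives at the wall; to repair your route you would either need these thin-domain estimates with the stated uniformity, or you should switch to the stream-function construction of Lemma \ref{lemma:div-curl}.
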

\begin{remark}
    The assumption \eqref{cond:no-flow-through} can be viewed as a uniform continuity requirement on the wall-normal velocity at the boundary, weaker than the near-wall $C^\alpha$ condition on $\bu$ in \cite{bardos2014non} and the $C^0$ condition in \cite{wiedemann2018weak}. This assumption \eqref{cond:no-flow-through} is motivated by condition (11) used in \cite{drivas2018nguyen} to establish energy conservation for weak Euler solutions. The significance of such boundary behavior was noted in \cite{bardos2018onsager, bardos2019onsager}.
\end{remark}


\section{Proof of Theorem \ref{thm:rot-energy}}\label{sec:proof-thm-1}
In this section we prove Theorem \ref{thm:rot-energy}, primarily by incorporating initial data into Theorem 1 of \cite{quan2022inertial} and Theorem 4 of \cite{quan2024onsager}. Let $\delta > 0$ be a small positive number. We first define the manifold $\partial B_{\delta,T} \coloneqq \partial B \times (-\delta, T) \subset \mathbb{R}^3 \times \mathbb{R}$, endowed with the natural $C^{\infty}$ structure and without boundary. We then smoothly extend all the quantities considered in \cite{quan2024onsager} in time to $(-\delta, T)$ and truncate them by multiplying by a time cutoff function.

Specifically, consider the skin friction $\btau_w^{\nu}: C^{\infty}(\bar{\Omega}\times[0,T), \mathbb{R}^3)$, which can be identified as a smooth section of the tangent bundle of $\partial B\times[0,T)$ (see Section 2 and 3 in \cite{quan2022inertial}). Since $\partial B\times[0,T)$ is a closed subset of $\partial B_{\delta,T}$, we can extend $\btau_w^{\nu}$ to a smooth section of the 
tangent bundle of $\partial B_{\delta,T}$ (see Lemma 10.12 in \cite{lee2013smooth}). 
We denote the extended section as $\widehat{\btau_w^\nu}$ and it belongs to the space of smooth sections of the tangent bundle $D(\partial B_{\delta,T}, \mathcal{T}(\partial B_{\delta,T}))$, which is a Fréchet space equipped with the seminorms:
\begin{equation}
    p_{s,m,i}(\bpsi)\coloneqq \sum_{j=1}^k\Tilde{p}_{s,m,i}((\Pi_2\Phi_i)^j\circ\bpsi|_{V_i}\circ\phi_i^{-1}) \lb{seminorm}
\end{equation}
for $\bpsi\in D(\partial B_{\delta,T}, \mathcal{T}(\partial B_{\delta,T}))$. 
Here $\cup_{i\in I}(V_i, \Phi_i)$ is a smooth structure of the tangent bundle $\mathcal{T}(\partial B_{\delta,T})$ with open subsets $V_i\subset \partial B_{\delta,T}$ and $\Phi_i:\Pi^{-1}(V_i)\to\mathbb{R}^4\times\mathbb{R}^3$, where $\Pi$ is the natural projection map from $\mathcal{T}(\partial B_{\delta,T})$ to $\partial B_{\delta,T}$. 
Moreover, $\cup_{i\in I}(\phi_i, V_i)$ with $ \phi_i:V_i\to\mathbb{R}^4$  is a smooth structure on $\partial B_{\delta,T}$ that satisfies $\Pi_1\phi_i = \phi_i\Pi$. 
Here, $\Pi_1$ projects onto the first factor of $\mathbb{R}^4\times\mathbb{R}^3$ and $\Pi_2$ on the second.
Lastly, $\{\Tilde{p}_{s,m,i}\}$ in \eqref{seminorm} is a countable and separating basis of seminorms on $C^{\infty}(\phi_i(V_i))$, defined as
\begin{equation}
    \Tilde{p}_{s,m,i}(f) \coloneqq \sup_{x\in K_m^{(i)}, |\alpha| \le s}|D^{\alpha}f(x)|
\end{equation}
for $f\in C^{\infty}(\phi_i(V_i))$. For more details on the setup of the smooth section space, see Section 2.2 in \cite{quan2022inertial}.

We first detail how to refine Theorem 1 in \cite{quan2022inertial}. 
The extended smooth section $\widehat{\btau_w^\nu}$ can be canonically identified as a distributional section in $D'(\partial B_{\delta,T}, \mathcal{T}(\partial B_{\delta,T}))$. We further truncate it in time by multiplying with a characteristic function $\chi_{[0,T)}$ to obtain $\widetilde{\btau_w^\nu}\coloneqq\chi_{[0,T)}\widehat{\btau_w^\nu}$, which is still a distributional section in $D'(\partial B_{\delta,T}, \mathcal{T}(\partial B_{\delta,T}))$.
The extension operator for smooth sections of the tangent bundle is defined exactly as in \cite{quan2022inertial}, mapping $\ext: D(\partial B_{\delta,T}, \mathcal{T}(\partial B_{\delta,T})) \to \bar{D}(\bar{\Omega} \times (-\delta, T), \mathbb{R}^3)$. Then, for any $\bpsi \in D(\partial B_{\delta,T}, \mathcal{T}(\partial B_{\delta,T}))$, we have $\barphi = \ext(\bpsi) \in \bar{D}(\bar{\Omega} \times (-\delta, T), \mathbb{R}^3)$. We can deduce by the incompressible Navier–Stokes equations and integration by parts that 
\begin{equation}
    \begin{aligned}\label{basicT}
        \innerprod{\widetilde{\btau_w^{\nu}}}{\bpsi}
        &= \int_{-\delta}^T\int_{\partial B}\widetilde{\btau_w^{\nu}}\bdot\bpsi dS dt
        = \int_{0}^T\int_{\partial B}\btau_w^{\nu}\bdot\barphi|_{\partial B\times[0,T)} dS dt\\
        &= \int_{\Omega}\barphi(\bx, 0)\bdot\bu_0^{\nu} \, dV
        + \int_0^T\int_{\Omega}\partial_t\barphi\bdot \mathbf{u}^{\nu}
        +\grad\barphi\bdots[\mathbf{u}^{\nu}\otimes\mathbf{u}^{\nu} 
        + p^{\nu}\mathbf{I}] \, dV dt\\
        &+\int_0^T \int_{\Omega} \nu \triangle \barphi \bdot\bu^{\nu} \, dV dt
    \end{aligned}
\end{equation}
In order to study boundary effects in the zero-viscosity limit, we need a notion of function and convergence in an unbounded domain that considers both the interior and neighborhoods of the boundary: 
\begin{definition}\label{def:extended-lp}
    For any $p\in[1,\infty]$, a function $f\in L^p_{\text{loc}}(\Omega)$ on an open set $\Omega\in\mathbb{R}^3$ (possibly unbounded) is said to be \textit{locally $L^p$ up to the boundary} if $\norm{f}_{L^p(\Omega_{\epsilon})} < \infty$ for some $\epsilon>0$. We denote the space of such functions as
    \begin{equation}
        L^p_{\text{loc}}(\bar{\Omega})\coloneqq \{f\in L_{\text{loc}}^p(\Omega); \|f\|_{L^p(\Omega_{\epsilon})}<\infty \text{ for some }\epsilon>0\}
    \end{equation}
\end{definition}
\noindent
This definition is independent of the choice of $\epsilon$ as it implies that $\norm{f}_{L^p(\Omega_{\delta})} < \infty$ for all $\delta > 0$. 
It is easy to show that $L_{\text{loc}}^p(\bar{\Omega})$ is equivalent to
\begin{equation}
    L^p_{\text{loc}}(\bar{\Omega})= \{f\in L_{\text{loc}}^p(\mathbb{R}^3):\, \|f\|_{L^p(\Omega\cap B)}<\infty \text{ for any open ball } B\subset\mathbb{R}^3 \}. 
\end{equation}
See, e.g., \cite{borchers1990equations}.

The next corollary is proved just as Lemma 1 in \cite{quan2024onsager}:
\begin{corollary}\label{cor:conv-int-boundary}
    For a sequence of functions $\{f_n\}_{n>0}$ in $L_{\text{loc}}^p(\bar\Omega)$, if 
    \beal
        &f_n\to f \quad \text{in } L^p_{\text{loc}}(\Omega)\\
        &f_n \text{ is uniformly bounded } \quad \text{in } L^p(\Omega_{\epsilon}) \text{ for some }\epsilon>0\label{uniform-bound-boundary}
    \eeal
    then $f\in L^p(\Omega_\epsilon)$ and thus $f\in L_{\text{loc}}^p(\bar\Omega)$.
\end{corollary}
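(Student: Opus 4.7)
The plan is to combine a diagonal extraction of an almost-everywhere convergent subsequence with Fatou's lemma, exploiting the fact that a uniform $L^p$-bound on a neighborhood of the boundary can be passed to the limit even though the hypothesis gives no convergence at all close to $\partial\Omega$.

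First, I would exhaust $\Omega$ by compact sets lying strictly in the interior. Because $\Omega$ is an open subset of $\mathbb{R}^3$, the sets $K_j := \{x \in \Omega : d(x) \geq 1/j \text{ and } |x| \leq j\}$ are compact in $\Omega$ and satisfy $\bigcup_j K_j = \Omega$. The hypothesis $f_n \to f$ in $L^p_{\text{loc}}(\Omega)$ gives $f_n \to f$ in $L^p(K_j)$ for each $j$, so a subsequence converges pointwise almost everywhere on $K_j$. A standard diagonal extraction over $j$ then produces a single subsequence $\{f_{n_k}\}$ with $f_{n_k}(x) \to f(x)$ for almost every $x \in \Omega$, in particular for almost every $x \in \Omega_\epsilon$.

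Second, I would apply Fatou's lemma on $\Omega_\epsilon$. For $1 \leq p < \infty$, setting $M := \sup_n \|f_n\|_{L^p(\Omega_\epsilon)} < \infty$ by \eqref{uniform-bound-boundary},
\[
\int_{\Omega_\epsilon} |f|^p \, dV \;\leq\; \liminf_{k\to\infty} \int_{\Omega_\epsilon} |f_{n_k}|^p \, dV \;\leq\; M^p,
\]
so $f \in L^p(\Omega_\epsilon)$ and hence $f \in L^p_{\text{loc}}(\bar\Omega)$ by Definition \ref{def:extended-lp}. The case $p = \infty$ is handled separately but trivially: $|f_n(x)| \leq M$ for a.e.\ $x \in \Omega_\epsilon$ passes to the pointwise a.e.\ limit along the chosen subsequence, giving $\|f\|_{L^\infty(\Omega_\epsilon)} \leq M$.

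There is no serious obstacle; the only delicate point is that $L^p_{\text{loc}}(\Omega)$ convergence by itself never yields information arbitrarily close to $\partial\Omega$, so one must first upgrade it to a.e.\ convergence on a set of full measure in $\Omega_\epsilon$ via the interior exhaustion, and only then cash in the uniform boundary bound through Fatou. Once this sequencing is respected, the argument is short and mimics the proof of Lemma~1 in \cite{quan2024onsager}.
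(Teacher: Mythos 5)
Your argument is correct: since $\Omega_\epsilon\subset\Omega$, the interior exhaustion $K_j$ covers $\Omega_\epsilon$ up to a null set, so the diagonal subsequence converges a.e.\ on $\Omega_\epsilon$ and Fatou (resp.\ the trivial pointwise bound for $p=\infty$) transfers the uniform bound to $f$. The paper itself gives no written proof, deferring to Lemma~1 of \cite{quan2024onsager}, and your route is exactly the standard one for that lemma; the only variant worth noting is that you can skip the a.e.\ subsequence extraction altogether by observing that $\|f\|_{L^p(K_j\cap\Omega_\epsilon)}=\lim_{n}\|f_n\|_{L^p(K_j\cap\Omega_\epsilon)}\le M$ for each $j$ (norm convergence on compacta) and then letting $j\to\infty$ via monotone convergence, which treats $1\le p<\infty$ and $p=\infty$ in one stroke; what your version buys instead is a pointwise a.e.\ limit that is sometimes reusable elsewhere.
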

\noindent
Then we can define convergence on $L_{\text{loc}}^p(\bar\Omega)$ as follows:
\begin{definition}\label{def:extended-lp-space-time}
    We say that a sequence of functions $f_n$ \textit{converges to} $f$ in $L_{\text{loc}}^p(\bar\Omega)$ if $f_n$ satisfies \eqref{uniform-bound-boundary}.
\end{definition}
\noindent
We extend Definition \ref{def:extended-lp} to functions varying in time as follows
\beal
    L^q(0,T; L^p_{\text{loc}}(\bar{\Omega}))\coloneqq 
    \{&f\in L^q(0,T; L_{\text{loc}}^p(\Omega)); \\
    &\|f\|_{L^q(0,T; L^p(\Omega_{\epsilon}))}<\infty \text{ for some }\epsilon>0\}
\eeal
for $p,q\in[1,\infty]$. A result similar to Corollary \ref{cor:conv-int-boundary} applies to functions in Definition \ref{def:extended-lp-space-time}, allowing us to define convergence of $f_n$ to $f$ in $L^q(0,T; L^p_{\text{loc}}(\bar{\Omega}))$ if
\beal
    &f_n\to f \quad \text{in } L^q(0,T; L^p_{\text{loc}}(\Omega))\\
    &f_n \text{ uniformly bounded } \quad \text{in } L^q(0,T; L^p(\Omega_{\epsilon})) \text{ for some }\epsilon>0\label{uniform-bound-boundary-space-time}
\eeal

Note that due to the time truncation, the integration in time is essentially performed over $(0, T)$, which leads to the same local Navier–Stokes equations integrated against test functions as in \cite{quan2022inertial}, but with an additional term involving the initial data $\bu_0^{\nu}$. Given $\bu_0^\nu = \bu_\phi(0) + \bu_\omega^\nu(0)$, it follows from \eqref{assumption:vel-l3-conv} that
\begin{equation}
    \bu_0^{\nu} \to \bu_0 \text{ in } L_{\text{loc}}^2(\bar{\Omega})\label{assumption:initial_data}
\end{equation}
An easy argument similar to that in Lemma 1 of \cite{quan2022inertial} shows that for all $\barphi\in\bar{D}(\bar\Omega\times(-\delta, T), \mathbb{R}^3)$
\begin{equation}
    \lim_{\nu\to0}\int_{\Omega}\barphi(\bx, 0)\cdot \bu_0^{\nu}(\bx) \, dV 
        = \int_{\Omega}\barphi(\bx, 0)\cdot \bu_0(\bx) \, dV 
\end{equation}
Furthermore, we have
\begin{equation}
    \left|\int_{\Omega}\barphi\cdot \bu_0 \, dV \right|
    \lesssim \norm{\bu_0}_{L^2(\text{supp}(\barphi(0)))}\sup_{i\in I}p_{1,m,i}(\bpsi)
\end{equation}
Other terms in \eqref{basicT} can be treated in the same way as in \cite{quan2022inertial}. Thus, we can conclude that
\red{
\begin{equation}
    \widetilde{\btau^{\nu}_w} \xrightarrow{\nu\to0} \btau_w 
    \text{ in } D'(\partial B_{\delta,T}, \mathcal{T}(\partial B_{\delta,T})) \label{tau-lim-ext}
\end{equation}}
The limiting distribution \red{$\btau_w$} is clearly supported in $\partial B \times [0, T)$ and is independent of the extension to $\partial B_{\delta,T}$ by the limit of \eqref{basicT}. Therefore, we can interpret \red{$\btau_w$} as acting on smooth sections in $D(\partial B_{\delta,T}, \mathcal{T}(\partial B_{\delta,T}))$ restricted to $\partial B \times [0, T)$. In this way, we have justified the convergence of skin friction when smeared with test functions $\barphi$
such that $\barphi(\cdot,0)\neq \bzed.$

Next we discuss how to extend Theorem 4 in \cite{quan2024onsager}. 
Given that $\bu_\phi$ is tangent to $\partial B$, it can also be identified as a smooth section of the tangent bundle of $\partial B\times[0,T)$ and can be extended to a smooth section in $D(\partial B_{\delta,T}, \mathcal{T}(\partial B_{\delta,T}))$ \cite{lee2013smooth}, which we denote as $\hat{\bu}_\phi$. Since for any scalar test function in $\psi\in D(\partial B_{\delta,T})$ it follows that $\psi \hat{\bu}_{\phi} \in D(\partial B_{\delta,T}, \mathcal{T}(\partial B_{\delta,T}))$, 
the dot product with the distributional section \red{$\btau_w$} can be defined in the same way as in \cite{quan2024onsager}. We have \red{$\bu_\phi \bdot \btau_w \in D'(\partial B_{\delta,T})$} by setting
\red{
\begin{equation}
    \innerprod{\bu_{\phi}\bdot\btau_w}{\psi}
    \coloneqq \innerprod{\btau_w}{\psi\hat{\bu}_{\phi}|_{\partial B}},\quad
    \forall \psi\in D(\partial B_{\delta,T})
\end{equation}}
Because \red{$\bu_{\phi} \bdot \btau_w$} is also supported on $\partial B \times [0, T)$, we can define the distributional pairing with $\psi \in D(\partial B_{\delta,T})$ restricted to $\partial B \times [0, T)$, i.e. $\bar{D}(\partial B\times[0,T))$.

Following the proof of Theorem 4 in \cite{quan2024onsager}, we take an arbitrary $\varphi \in \bar{D}(\bar{\Omega} \times [0, T))$ with $\psi = \varphi|{\partial B} \in C^{\infty}(\partial B \times [0, T))$, and test the rotational energy equation against it. The local energy balance for the rotational flow now incorporates the initial data:
\begin{equation}
    \begin{aligned}
        -\int_{\Omega}&\frac{1}{2}\varphi(\bx, 0)|\bu_{\omega}^{\nu}(0)|^2 \, dV
        -\int_0^T\int_{\Omega}\frac{1}{2}\partial_t\varphi|\mathbf{u}_{\omega}^{\nu}|^2 + 
        \grad\varphi\bdot\left[\frac{1}{2}|\mathbf{u}_{\omega}^{\nu}|^2\mathbf{u}^{\nu}
        +p_{\omega}^{\nu}\mathbf{u}_{\omega}^{\nu}\right]\, dV dt\\
        &=\int_0^T\int_{\Omega}\varphi\grad\bdot(\nu\mathbf{u}_{\omega}^{\nu}\btimes\bomega)
        \, dV dt
        -\int_0^T\int_{\Omega}\nu\varphi|\bomega^{\nu}|^2 \, dV dt\\
        &-\int_0^T\int_{\Omega}\varphi\grad\mathbf{u}_{\phi}\bdots\mathbf{u}_{\omega}^{\nu}\otimes\mathbf{u}_{\omega}^{\nu} \, dV dt
    \end{aligned}\label{rotationTest}
\end{equation}
Under condition \eqref{assumption:vel-l3-conv}, it is easy to show using an argument similar to that in Lemma 1 of \cite{quan2024onsager} that
\begin{equation}
    \lim_{\nu\to0}\int_{\Omega}\frac{1}{2}\varphi(\bx, 0)|\bu_{\omega}^{\nu}(0)|^2 \, dV
    = \int_{\Omega}\frac{1}{2}\varphi(\bx, 0)|\bu_{\omega}(0)|^2 \, dV
\end{equation}
Integration by parts gives
\begin{equation}
    \begin{aligned}
        &\int_0^T\int_{\Omega}\varphi\grad\bdot(\nu\mathbf{u}_{\omega}^{\nu}\btimes\bomega) \, dV dt
        = \int_0^T\int_{\partial B}\psi\bu_{\phi}\bdot\btau_w^{\nu}dS dt \\
        &\hspace{30pt} - \int_0^T\int_{\Omega}\nu\Delta(\varphi\bu_{\phi})\cdot\bu^{\nu} \, dV dt
        + \int_0^T\int_{\Omega}\varphi\grad\bdot(\nu\bu^\nu\times\bomega^{\nu})\, dVdt
\end{aligned}\label{nu_u_omega}
\end{equation}
Extending $\psi$, $\bu_{\phi}$, $\btau_w^{\nu}$ in any way guaranteed by Lemma 2.26 and Lemma 10.12 in \cite{lee2013smooth} respectively for scalar functions and smooth sections, we have $\hat{\psi}\in D(\partial B_{\delta,T})$, $\hat{\psi}\hat{\bu}_{\phi}\in D(\partial B_{\delta,T}, \mathcal{T}(\partial B_{\delta,T}))$ and
\begin{equation}
    \int_0^T\int_{\partial B}\psi\bu_{\phi}\bdot\btau_w^{\nu}dS dt 
    = \int_{-\delta}^T\int_{\partial B}\hat{\psi}\hat{\bu}_{\phi}\bdot\widetilde{\btau_w^{\nu}}dS dt
    = \innerprod{\widetilde{\btau_w^{\nu}}}{\hat{\psi}\hat{\bu}_{\phi}} \label{sectional_prod}
\end{equation}
Thus as $\nu\to0$, \eqref{tau-lim-ext} gives \red{
\begin{equation}
    \int_0^T\int_{\partial B}\psi\bu_{\phi}\bdot\btau_w^{\nu}dS dt 
    = \innerprod{\widetilde{\btau_w^{\nu}}}{\hat{\psi}\hat{\bu}_{\phi}} 
    \to \innerprod{\btau_w}{\hat{\psi}\hat{\bu}_{\phi}}
    =  \innerprod{\bu_{\phi}\bdot\btau_w}{\hat{\psi}}\label{phi_tau_limit}
\end{equation}}
Due to the time cutoff by the characteristic function $\chi_{[0,T)}$ in $\widetilde{\btau_w^{\nu}}$, \eqref{sectional_prod} is invariant under choice of smooth extension and thus the limiting distributional product \eqref{phi_tau_limit} is likewise independent. Since \red{$\btau_w$} and \red{$\bu_{\phi} \bdot \btau_w$} are supported on $\partial B \times [0, T)$, we can then define for any $\psi \in C^{\infty}(\partial B \times [0, T))$ that
\red{\begin{equation}
    \innerprod{\bu_{\phi}\bdot\btau_w}{\psi}
    \coloneqq\innerprod{\btau_w}{\hat{\psi}\hat{\bu}_{\phi}|_{\partial B}}
\end{equation}}
All the other terms converge in the same way as in \cite{quan2024onsager} as $\nu\to0$ 
and we obtain the local energy balance in the inviscid rotational flow \eqref{eq:rot-energy-loc}.

Finally, we discuss the proofs under the strengthened global hypothesis 
\eqref{assumption:vel-global-l2}. The inviscid Josephson-Anderson relation
\eqref{eq:ja-relation} was already established under this assumption in 
Theorem 1 of \cite{quan2024onsager}. 
Since $\grad\bdot\bu_\omega^{\nu} = 0$ and $\bu_\omega^\nu\bdot\bn = -\bu_\phi\bdot\bn = 0$, the following equality holds
\begin{equation}
    \int_\Omega\bu_\omega^\nu(\bx, t)\bdot\grad v(\bx) \, dV = 0 
\end{equation}
for every $t\in[0,T]$ and every $v\in W_{\text{loc}}^{1,2}(\Omega)$ with $\grad v\in L^2(\Omega)$.
Then the global convergence \eqref{assumption:vel-global-l2} implies that 
\begin{equation}
    \int_\Omega\bu_\omega(\bx, t)\bdot\grad v(\bx) \, dV = 0 
\end{equation}
for a.e. $t\in[0,T]$. Thus, by the characterization \eqref{eq:characterization-H-Omega-1}, $\bu_\omega\in L^2(0,T; H(\Omega))$ 
and $\bu_\omega$ is a finite-energy weak solution of \eqref{E-omega-mom2}.

\section{Proof of Theorem \ref{thm:external}}\label{sec:proof-thm-2} 
We use a relative energy argument as in \cite{wiedemann2018weak}. Since $\bu_{\omega} \in L^{2}(0,T; H(\Omega))$, we can define for almost every $t\in(0,T)$,
\begin{equation}
    E_{\omega}(t) \coloneqq \int_{\Omega} |\bu_{\omega}(\bx,  t)|^2\, dV
\end{equation}
We start with the inviscid local energy balance in Theorem \ref{thm:rot-energy} 
\beal
    &
    -\int_{\Omega}\frac{1}{2}\varphi(\bx,  0)|\bu_{\omega}(\bx, 0)|^2\, dV
    -\int_0^T\int_{\Omega}\frac{1}{2}\partial_t\varphi|\bu_{\omega}|^2 + 
    \grad\varphi\bdot
    \left[\frac{1}{2}|\bu_{\omega}|^2\bu+p_{\omega}\bu_{\omega}\right]\, dV dt\\
    &=\langle\bu_\phi\bdot \btau_w,\psi\rangle
    -\langle Q,\varphi\rangle  
    -\int_0^T\int_{\Omega}\varphi\grad\mathbf{u}_{\phi}\bdots
    \mathbf{u}_{\omega}\otimes\mathbf{u}_{\omega}\, dV dt\label{fgRotationalLimit}
\eeal
for test functions $\varphi\in\bar{D}(\bar{\Omega}\times[0,T))$ with $\psi = \varphi|_{\partial B}$. 
Let $B_R = B(0,R)$ for some $R > 0$.
Then we define a specific test function
\begin{equation}
    \varphi = \chi_{(-\delta,\tau]}^{\epsilon}\chi_{B_R}^{\eta}\in C^{\infty}_c(\mathbb{R}^3\times \mathbb{R})
\end{equation}
as a product of two mollified characteristic functions respectively in time and space, with sufficiently small $\epsilon, \eta > 0$ and some fixed numbers $\tau\in(0,T)$ and $\delta > \epsilon > 0$. 
More specifically, we  define the mollified time characteristic function
\begin{equation}
    \chi_{(-\delta,\tau]}^{\epsilon} = \chi_{(-\delta,\tau]} * G_{\epsilon}    
\end{equation}
where $G$ is a standard smooth kernel in $C_c^{\infty}(\mathbb{R})$ such that $\text{supp}(G) \subset [-1,1]$, $G\ge 0$, and $\int_{\mathbb{R}} G(t)dt = 1$, and $G_{\epsilon}(t) = \frac{1}{\epsilon} G(\frac{t}{\epsilon})$.
We mollify the space characteristic function $\chi_{B_R}$ in the same way but with 
$H_{\eta}(\br) = \frac{1}{\eta^3} H(\frac{\br}{\eta})$ 
for a standard smooth kernel $H\in C_c^{\infty}(\mathbb{R}^3)$ such that $\int_{\mathbb{R}^3}H(\br)\, dV = 1$ and is supported in the unit ball $\{|\br|\le 1\}$.
Then, we define the restriction 
\begin{equation}
    \varphi^{\tau}_{\epsilon, R} \coloneqq \varphi|_{\bar{\Omega}\times[0,T)}
\end{equation}
and by definition $\varphi^{\tau}_{\epsilon, R} \in \bar{D}(\bar{\Omega}\times[0,T))$.
It follows that $0\le\varphi^{\tau}_{\epsilon, R}\le 1$ everywhere and $\varphi^{\tau}_{\epsilon, R}$ does not necessarily vanish at $t=0$ or on $\partial B$.  
Since $Q\ge 0$, the equality \eqref{fgRotationalLimit} yields the following inequality 
\begin{equation}
    \begin{aligned} 
        &
        -\int_0^T\int_{\Omega}\frac{1}{2}\partial_t\varphi^{\tau}_{\epsilon, R}|\bu_{\omega}|^2\, dV dt
        -\int_0^T\int_{\Omega}\grad\varphi^{\tau}_{\epsilon, R}\bdot
        \left[\frac{1}{2}|\bu_{\omega}|^2\bu+p_{\omega}\bu_{\omega}\right]\, dV dt \\ 
        & \hspace{30pt} \le \int_{\Omega}\frac{1}{2}|\bu_{\omega}(0)|^2\, dV dt
        -\int_0^T\int_{\Omega}\varphi^{\tau}_{\epsilon, R}\grad\mathbf{u}_{\phi}\bdots
        \mathbf{u}_{\omega}\otimes\mathbf{u}_{\omega}\, dV dt
    \end{aligned}\label{fgRotationalLimit2}
\end{equation}
\red{Here we have used the fact that $\langle\bu_\phi\bdot \btau_w,\psi\rangle
=\int_{-\delta}^T \langle\bu_\phi\bdot \btau_w(\cdot,t),1\rangle\, 
\chi_{(-\delta,\tau]}^{\epsilon}(t) dt=0$ under the hypothesis of our theorem.}

For sufficiently small $\epsilon$ s.t. $\tau + \epsilon < T$, 
$\chi_{(-\delta,\tau]}^{\epsilon}(t) = 1$ for $t\in[-\delta + \epsilon,\tau-\epsilon)$ and 0 for $t>\tau + \epsilon$.
Thus, we have that
\begin{equation*}
    \partial_t\chi_{(-\delta,\tau]}^{\epsilon} = \chi_{(-\delta,\tau]}*\partial_tG_{\epsilon}
    = -\frac{1}{\epsilon}G(\frac{t-\tau}{\epsilon}) = -G_\epsilon(t-\tau)
\end{equation*}
and 
\begin{equation*}
    \text{supp}(\partial_t\chi_{(-\delta,\tau]}^{\epsilon})\cap(0,T) = (\tau-\epsilon, \tau+\epsilon)
\end{equation*}
Hence, 
\begin{equation}
    \begin{aligned}
        \int_0^T\int_{\Omega}\frac{1}{2}\partial_t\varphi^{\tau}_{\epsilon, R}|\bu_{\omega}|^2 \, dV dt
        & = -\int_0^TG_{\epsilon}(t-\tau)\frac{1}{2}\int_{\Omega}\chi_{B_R}^{\eta}(\bx)|\bu_{\omega}(\bx,t)|^2 \, dVdt\\
        & = -\check{G}_{\epsilon}*I(\tau)
    \end{aligned}\label{local_t_energy}
\end{equation}
where $\check{G}_\epsilon(t) \coloneqq G_\epsilon(-t)$, and $I(t) \coloneqq \frac{1}{2}\int_{\Omega}\chi_{B_R}^{\eta}(\bx)|\bu_{\omega}|^2(\bx, t)\, dV$ which is a integrable function in $t$ by Fubini theorem.
Furthermore, by a general result of approximation to the identity (i.e. Theorem 3.2.1. of \cite{stein2009real}), we have for a.e. $\tau\in(0,T)$,
\begin{equation}
    \check{G}_{\epsilon}*I(\tau)\xrightarrow{\epsilon\to0}I(\tau)\label{ae_conv}
\end{equation}
Then it follows that  
\begin{equation}
    \lim_{\epsilon\to0}\int_0^T\int_{\Omega}\frac{1}{2}\partial_t\varphi^{\tau}_{\epsilon, R}|\bu_{\omega}|^2 \, dV dt
    = -\frac{1}{2}\int_{\Omega}\chi_{B_R}^{\eta}(\bx) |\bu_{\omega}(\bx, \tau)|^2 \, dV
\end{equation}
which further converges as $R\to\infty$ by monotonicity 
\begin{equation}
    \lim_{R\to\infty}\lim_{\epsilon\to0}\int_0^T\int_{\Omega}
    \frac{1}{2}\partial_t\varphi^{\tau}_{\epsilon, R}|\bu_{\omega}|^2 \, dV dt
    = -\frac{1}{2}\int_{\Omega}|\bu_{\omega}(\bx, \tau)|^2 \, dV
\end{equation}  

Now we look at the flux term involving the spatial gradient. 
\begin{equation}
    \grad\varphi^{\tau}_{\epsilon, R} = 
    \chi_{(-\delta, \tau]}^{\epsilon}\grad\chi_{B_R}^{\eta} 
    = \chi_{(-\delta, \tau]}^{\epsilon}\chi_{B_R} * \grad H_\eta
\end{equation}
which is only supported on $A_{R,\eta} \coloneqq B_{R+\eta}\backslash B_{R-\eta}$, 
an annulus of thickness $2\eta$, 
and $\grad\varphi^{\tau}_{\epsilon, R}$ is bounded by $\frac{C}{\eta}$ uniformly in $R$.
Then, by H\"older inequality 
\begin{align*}
    &\int_0^T\int_{\Omega}\grad\varphi^{\tau}_{\epsilon, R}\bdot
    \left[\frac{1}{2}|\bu_{\omega}|^2\bu+p_{\omega}\bu_{\omega}\right]\, dV dt\\
    & \hspace{30pt} \le C\bigg(\norm{\bu_{\omega}}_{L^3(0,T;L^3(A_{R,\eta}))}^3
    + \norm{\bu_{\omega}}_{L^2(0,T;L^2(A_{R,\eta}))}^2
    \cdot\norm{\bu_{\phi}}_{L^{\infty}(0,T;L^{\infty}(A_{R,\eta}))}\\
    &\hspace{60pt} + \norm{p_{\omega}}_{L^{q}(0,T;L^{q}(A_{R,\eta}))}\cdot
    \norm{\bu_{\omega}}_{L^{q'}(0,T;L^{q'}(A_{R,\eta}))}\bigg)\\
    &\hspace{60pt} \xrightarrow{R\to\infty} 0
\end{align*}
for some $q\in(\frac{3}{2},2)$ and $\frac{1}{q}+\frac{1}{q'}=1.$
The upper bound above goes to 0 since $p_{\omega}$ is globally bounded in spacetime $L^{q}$ and $\bu_\omega$ is globally bounded in spacetime $L^r$ for any $r\in[2,3],$ by interpolation between $L^2$ and $L^3$ assumed in \eqref{assumption:ext-vel-global}. Here $q' = \frac{q}{q-1}\in (2,3)$ for $q\in(3/2, 2)$.

Finally, the global boundedness of $\bu_{\omega}$ gives
\begin{equation}
    \int_0^T\int_{\Omega}|\varphi^{\tau}_{\epsilon, R}\grad\mathbf{u}_{\phi}\bdots
    \mathbf{u}_{\omega}\otimes\mathbf{u}_{\omega}|\, dV dt 
    \le \norm{\grad\bu_{\phi}}_{L^{\infty}(\Omega\times(0,T))}
    \cdot\norm{\bu_{\omega}}_{L^2(0,T;L^2(\Omega))}^2
\end{equation}
Given $|\varphi^{\tau}_{\epsilon, R}f| \le |f|$ with $f = \grad\bu_\phi:\bu_\omega\otimes\bu_\omega$ integrable on $\Omega\times(0,T)$ because of assumption \eqref{assumption:ext-vel-global}, 
it follows from dominated convergence theorem that 
\begin{equation}
    \lim_{R\to\infty}\lim_{\epsilon\to0}
    \int_0^T\int_{\Omega}\varphi^{\tau}_{\epsilon, R}\grad\mathbf{u}_{\phi}\bdots
    \mathbf{u}_{\omega}\otimes\mathbf{u}_{\omega}\, dV dt 
    = \int_0^{\tau}\int_{\Omega}\grad\mathbf{u}_{\phi}\bdots
    \mathbf{u}_{\omega}\otimes\mathbf{u}_{\omega}\, dV dt
\end{equation}
Therefore, as $\epsilon\to0$ and $R\to\infty$, we obtain from the local inequality \eqref{fgRotationalLimit2} the global result that, for a.e. $\tau\in[0,T]$,
\begin{align*} 
    E_{\omega}(\tau)
    &\le E_{\omega}(0) 
    -\int_0^{\tau}\int_{\Omega}\grad\mathbf{u}_{\phi}\bdots
    \mathbf{u}_{\omega}\otimes\mathbf{u}_{\omega}\, dV dt\\
    &\le E_{\omega}(0) + 
    C\int_0^{\tau}\norm{\grad\bu_{\phi}(s)}_{L^{\infty}(\Omega)}E_{\omega}(t)dt
\end{align*}
where the second inequality is deduced from 
Cauchy-Schwartz. 
Thus by Gr\"onwall's inequality,
\begin{equation}
    E_{\omega}(\tau) \le E_{\omega}(0)\exp\left(C\int_0^{\tau}\norm{\grad\bu_{\phi}(s)}_{L^{\infty}(\Omega)}dt\right), \quad {\rm a.e. } \tau\in (0,T). 
\end{equation}
Since $\bu_{\omega}(0) = \bu(0) - \bu_{\phi} = \bu_0 - \bu_0 = \bzed$, $E_{\omega}(0) = 0$ and
$E_{\omega}(\tau) = 0$ for a.e. $\tau\in(0,T)$. Therefore, we can conclude that $\bu_{\omega} = \bzed$ and thus $\bu = \bu_{\phi}$
almost everywhere in $\Omega\times(0,T)$.

\section{Proof of Theorem \ref{thm:internal}}\label{sec:proof-thm-3} 
The proof is based on the concept of a dissipative solution of Euler up to the boundary in the sense 
of Lions-Bardos-Titi \cite{lions1996mathematical,bardos2013mathematics}, which is defined to be a $\bu\in L^2([0,T],H(\Omega))$
such that for every divergence-free test vector field $\bw\in C^1(\Bar{\Omega}\times[0,T])$ with $\bw|_{\partial\Omega}\cdot\bn = 0$, the following inequality holds
\beal
    \int_{\Omega}|\bu(\bx, t)&-\bw(\bx, t)|^2\, dV
    \le e^{\int_0^t2\norm{S(\bw)}_{L^{\infty}(\Omega)}ds}\int_{\Omega}|\bu(\bx, 0)-\bw(\bx, 0)|^2\, dV\\
    &+2\int_0^t\int_{\Omega} e^{\int_s^t2\norm{S(\bw)}_{L^{\infty}(\Omega)}d\tau}
    E(\bw(\bx, s))\cdot(\bu(\bx, s)-\bw(\bx, s))\, dV ds. \lb{dissEbdry}
\eeal
Here $S(\bw) = (\grad\bw+\grad\bw^\top)/2$ and the Euler residual is defined by 
\begin{align}
E(\bw) = -\partial_t\bw - \mathbb{P}((\bw\cdot\grad)\bw)\label{def:s-and-w}
\end{align}
with $\mathbb{P}$ denoting the Leray-Helmholtz projection on $H(\Omega)$.
Weak-strong uniqueness in this class of dissipative solutions is immediate:  
see \cite{bardos2013mathematics}, Definition 4.1 and Remark 3.1. 

A useful fact is the following
\begin{lemma}\label{lemma1}
An admissible weak Euler solution $\bu\in L^2([0,T],H(\Omega))$ satisfying 
\begin{align}
    \frac{d}{dt}\int_{\Omega}\bu\cdot\bw \, dV = \int_{\Omega}(S(\bw)(\bu-\bw)\cdot(\bu-\bw)-E(\bw)\cdot\bu)\, dV\label{key}
\end{align}
in the sense of distributions for every divergence-free field $\bw\in C^1(\Bar{\Omega}\times[0,T])$ with $\bw|_{\partial\Omega}\cdot\bn = 0$ is a dissipative solution of Euler up to the boundary \eqref{dissEbdry}. 
\end{lemma}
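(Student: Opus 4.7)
The plan is to derive the dissipative-solution inequality \eqref{dissEbdry} from the distributional identity \eqref{key} by expanding $|\bu-\bw|^2 = |\bu|^2 - 2\,\bu\cdot\bw + |\bw|^2$, integrating over $\Omega$ and from $0$ to $t$, and closing with the integral form of Gr\"onwall's inequality. Each of the three resulting pieces is controlled by a separate input: admissibility of $\bu$, the hypothesis \eqref{key}, and a direct computation for the smooth field $\bw$.

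For the $\int_\Omega |\bu|^2\,dV$ piece, admissibility \eqref{admiss} immediately gives $\int_\Omega |\bu(\bx,t)|^2\,dV - \int_\Omega |\bu(\bx,0)|^2\,dV \le 0$ for a.e.\ $t\in[0,T]$. For the $\int_\Omega |\bw|^2\,dV$ piece, using that $\bw\in C^1(\bar\Omega\times[0,T])$ is divergence-free with $\bw\cdot\bn = 0$ on $\partial\Omega$ together with the definition of $E(\bw)$, I compute
\[ \frac{d}{dt}\int_\Omega |\bw|^2\,dV = -2\int_\Omega \bw\cdot E(\bw)\,dV - 2\int_\Omega \bw\cdot\mathbb{P}((\bw\cdot\grad)\bw)\,dV, \]
and observe that the projection integral vanishes: since $\bw\in H(\Omega)$ gives $\mathbb{P}(\bw)=\bw$, self-adjointness of $\mathbb{P}$ on $L^2$ reduces it to $\int_\Omega \bw\cdot(\bw\cdot\grad)\bw\,dV = \frac{1}{2}\int_{\partial\Omega}|\bw|^2(\bw\cdot\bn)\,dS = 0$. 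For the cross term, integrating \eqref{key} in time (after mollifying $\chi_{[0,t]}$ and passing to Lebesgue points) produces
\[ \int_\Omega \bu\cdot\bw\,dV\Big|_0^t = \int_0^t\!\int_\Omega \big[S(\bw)(\bu-\bw)\cdot(\bu-\bw) - E(\bw)\cdot\bu\big]\,dV\,ds. \]

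Assembling the three contributions, the $-2\int\bw\cdot E(\bw)$ term combines with $+2\int E(\bw)\cdot\bu$ into $+2\int E(\bw)\cdot(\bu-\bw)$, yielding
\[ \int_\Omega |\bu(t)-\bw(t)|^2\,dV \le \int_\Omega |\bu(0)-\bw(0)|^2\,dV - 2\int_0^t\!\int_\Omega S(\bw)(\bu-\bw)\cdot(\bu-\bw)\,dV\,ds + 2\int_0^t\!\int_\Omega E(\bw)\cdot(\bu-\bw)\,dV\,ds. \]
The pointwise bound $|S(\bw)(\bu-\bw)\cdot(\bu-\bw)|\le \norm{S(\bw)}_{L^\infty(\Omega)}|\bu-\bw|^2$ converts this into a linear integral inequality for $\int_\Omega |\bu-\bw|^2\,dV$, and the integral form of Gr\"onwall yields \eqref{dissEbdry} exactly.

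The main obstacle I anticipate is the rigorous passage from the distributional identity \eqref{key} to a pointwise-in-$t$ equality at both $t=0$ and a generic $t>0$: this requires testing against a time mollifier of $\chi_{[0,t]}$ and exploiting weak continuity in $t$ of $\int_\Omega \bu(t)\cdot\bw(t)\,dV$ (which follows since $\bu\in L^\infty(0,T;H(\Omega))$ and $\bw\in C^1$), followed by Lebesgue-point selection. Once this standard time-regularity issue is settled, the rest of the argument is algebraic manipulation plus integral Gr\"onwall.
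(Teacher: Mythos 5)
Your proposal is correct and is essentially the argument the paper relies on: the paper's proof of Lemma \ref{lemma1} is just a citation to Section 7 of \cite{bardos2014non}, where exactly this relative-energy computation (expand $|\bu-\bw|^2$, use admissibility for $\int|\bu|^2$, the identity \eqref{key} for the cross term, the $\bw$-energy identity with the vanishing projection term, then integral Gr\"onwall) is carried out. The time-regularity point you flag—passing from the distributional identity to an absolutely continuous representative of $t\mapsto\int_\Omega\bu\cdot\bw\,dV$ attaining the initial value—is indeed the only delicate step, and your mollification/Lebesgue-point treatment handles it in the standard way.
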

\begin{proof}
See Section 7 of \cite{bardos2014non}. 
\end{proof}
\noindent In \cite{de2010admissibility}, 
the identity \eqref{key} is proved for the case that 
$\bw$ is compactly supported in $\Omega$ at almost every time. 
Now we consider a divergence-free test vector field $\bw\in C^1(\Bar{\Omega}\times[0,T])$ 
with boundary condition $\bw|_{\partial\Omega}\bdot\bn = 0$, 
which does not necessarily have compact support in $\Omega$. 
We follow the approach of \cite{bardos2014non, wiedemann2018weak} to approximate $\bw$ with vector fields that do have compact support. 

For this purpose, we need the following result on existence of solutions to the div-curl problem:
\begin{lemma}\label{lemma:div-curl}
    Let $\Omega$ be a bounded, simply-connected domain in $\mathbb{R}^n$ with $n=2,3$ and with $C^{\infty}$ boundary. Consider a divergence-free vector field $\bw\in C^1(\bar\Omega\times[0,T])$ with boundary condition $\bw\cdot\bn = 0$. Then for $n=3$, there exist a vector stream function ${\boldsymbol\Psi}\in C(0,T;C^{1,\alpha}(\bar{\Omega})\cap C^1(\bar\Omega\times[0,T])$ for $0<\alpha<1$ such that
    \beal
        \grad\btimes{\boldsymbol \Psi} &= \bw \quad\text{ in }\Omega\\
        \grad\bdot{\boldsymbol \Psi} &= 0 \quad\text{ in }\Omega\\
        \bn\btimes\boldsymbol\Psi &= \mathbf{0}\quad\text{ on } \partial\Omega\label{eq:div-curl-3d}
    \eeal
For $n=2$, ${\boldsymbol\Psi}=\psi\hat{{\bf z}}$ for a scalar stream function $\psi$ satisfying 
$\psi=0$ on $\partial\Omega.$
\end{lemma}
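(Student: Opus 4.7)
The plan is to treat the two- and three-dimensional cases separately, with $n=3$ carrying the main technical content. For $n=2$, I would construct $\psi$ as a classical stream function. Since $\bw$ is divergence-free, the $1$-form $w_1\,dx_2 - w_2\,dx_1$ is closed on the simply-connected planar domain $\Omega$ at each fixed $t$, hence exact, producing a scalar potential $\psi$ satisfying $\partial_2\psi = w_1$ and $-\partial_1\psi = w_2$, determined up to an additive constant. The boundary condition $\bw\bdot\bn = 0$ forces the tangential derivative of $\psi$ along $\partial\Omega$ to vanish, so $\psi$ is constant on $\partial\Omega$ (connected because $\Omega$ is simply connected); one fixes this constant to zero. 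Space regularity $\psi\in C^2(\bar\Omega)$ follows from $\bw\in C^1$, and linear dependence on $\bw$ transports $C^1$ regularity in time.

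For $n=3$, the plan is to formulate the problem as the elliptic boundary value problem
\begin{equation*}
-\Delta\bPsi = \grad\btimes\bw \;\;\text{in }\Omega, \quad \bn\btimes\bPsi = \bzed,\;\; \grad\bdot\bPsi = 0 \;\;\text{on }\partial\Omega,
\end{equation*}
at each fixed $t\in[0,T]$. Existence and uniqueness should follow from Lax-Milgram on the Hilbert space $X_N = \{\bPsi \in H^1(\Omega;\mathbb{R}^3): \bn\btimes\bPsi = \bzed \text{ on }\partial\Omega\}$, with coercivity supplied by a Friedrichs-type inequality $\|\bPsi\|_{H^1} \lesssim \|\grad\btimes\bPsi\|_{L^2} + \|\grad\bdot\bPsi\|_{L^2}$, which holds on simply-connected $C^\infty$ domains because the space of harmonic fields in $X_N$ has dimension equal to the first Betti number $b_1(\Omega) = 0$.

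The next step is to verify that the resulting $\bPsi$ satisfies the three required identities. Taking the divergence of the PDE shows $\grad\bdot\bPsi$ is harmonic in $\Omega$; combined with $\grad\bdot\bPsi = 0$ on $\partial\Omega$, Dirichlet uniqueness yields $\grad\bdot\bPsi \equiv 0$ in $\Omega$. Setting $\bv \coloneqq \grad\btimes\bPsi - \bw$ and using $\grad\btimes(\grad\btimes\bPsi) = -\Delta\bPsi + \grad(\grad\bdot\bPsi)$ then gives $\grad\btimes\bv = 0$ and $\grad\bdot\bv = 0$ in $\Omega$. The tangential-trace identity that $(\grad\btimes\bPsi)\bdot\bn$ is a surface-tangential derivative of $\bPsi_\tau$, combined with $\bPsi_\tau = 0$ (from $\bn\btimes\bPsi = \bzed$) and $\bw\bdot\bn = 0$, gives $\bv\bdot\bn = 0$ on $\partial\Omega$. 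Simple connectedness then forces $\bv = \bzed$: curl-freeness yields $\bv = \grad\chi$, divergence-freeness gives $\Delta\chi = 0$, and $\partial_n\chi = 0$ on $\partial\Omega$ makes $\chi$ constant. For spatial regularity, $\grad\btimes\bw\in C^0(\bar\Omega)\subset L^p(\Omega)$ for every $p<\infty$, so $L^p$ elliptic estimates put $\bPsi\in W^{2,p}(\Omega)$ for all such $p$; Sobolev embedding then gives $\bPsi\in C^{1,\alpha}(\bar\Omega)$ for every $\alpha\in(0,1)$. Time regularity transfers by linearity: $\partial_t\bPsi$ solves the same boundary value problem with source $\grad\btimes\partial_t\bw\in C^0$.

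The hardest step I expect to be the elliptic setup in the three-dimensional case: selecting the function space on which the mixed boundary conditions produce a coercive bilinear form, and verifying that the particular combination of $\grad\bdot\bPsi=0$ on $\partial\Omega$ with $\bn\btimes\bPsi=\bzed$ decouples cleanly into the desired div-curl identities rather than leaving residual harmonic obstructions. Simple connectedness enters decisively both in the Friedrichs inequality and in the final identification $\bv=\bzed$; without it, nontrivial harmonic Neumann-type fields would obstruct uniqueness and the scheme would only recover $\bw$ modulo such a harmonic piece.
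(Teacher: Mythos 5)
Your construction is a genuinely different route from the paper's: the paper simply invokes the classical potential-theoretic existence theorems of Kress (Theorems 5.1 and 5.2 of \cite{kress1969grundzuege}), which give, for divergence-free $\bw\in C^\alpha(\bar\Omega)$ tangent to the boundary, a solution $\bPsi\in C^{1,\alpha}(\bar\Omega)$ of the system \eqref{eq:div-curl-3d}, unique under the normalization $\int_{\partial\Omega}\bPsi\bdot\bn\,dA=0$, and then handles time dependence by linearity and stability of that solution map. You instead build $\bPsi$ variationally by solving $-\Delta\bPsi=\grad\btimes\bw$ with the mixed conditions $\bn\btimes\bPsi=\bzed$ (essential) and $\grad\bdot\bPsi=0$ (natural), and your verification that the solution satisfies the three identities is correct: the divergence is harmonic with zero boundary values, and the discrepancy $\bv=\grad\btimes\bPsi-\bw$ is curl-free, divergence-free and tangent to $\partial\Omega$, hence zero precisely because $b_1(\Omega)=0$. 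The self-contained elliptic argument is a reasonable alternative to citing Kress, and your $W^{2,p}$/Morrey regularity step even gives $C^{1,\alpha}$ for every $\alpha<1$ directly from $\bw\in C^1$.

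There are, however, two concrete gaps. First, you attribute the coercivity of the bilinear form on $X_N=\{\bPsi\in H^1:\bn\btimes\bPsi=\bzed\}$ to the vanishing of the first Betti number, but the harmonic space relevant to the boundary condition $\bn\btimes\bPsi=\bzed$ is the one whose dimension equals the number of connected components of $\partial\Omega$ minus one (the second Betti number); $b_1$ governs the dual space with $\bv\bdot\bn=0$, which is exactly what you need (and use correctly) in the final identification $\bv=\bzed$. Under the lemma's literal hypotheses this matters: a spherical shell is bounded, simply connected, with $C^\infty$ boundary, yet $\grad(1/|\bx|)$ is a nonzero element of $X_N$ with vanishing curl and divergence, so your Friedrichs inequality and the Lax--Milgram uniqueness fail there. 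The fix is routine --- run Lax--Milgram on the orthogonal complement of that finite-dimensional harmonic space, noting that both sides of the weak equation vanish identically on it, so the constructed $\bPsi$ still solves the full problem (only uniqueness is lost, which the lemma does not claim); this is also how Kress's normalization $\int_{\partial\Omega}\bPsi\bdot\bn\,dA=0$ enters in the paper's citation. Second, your time-regularity step asserts that $\partial_t\bPsi$ solves the problem with source $\grad\btimes\partial_t\bw\in C^0$, but $\bw\in C^1(\bar\Omega\times[0,T])$ gives no control of the mixed derivatives $\partial_t\grad\bw$. Instead, write the right-hand side of the weak formulation as $\int_\Omega\bw\bdot\grad\btimes\bphi\,dV$ (no derivative on the data), so the solution operator is linear and bounded from $L^p(\Omega)$ into $W^{1,p}(\Omega)\hookrightarrow C^0(\bar\Omega)$, and apply it to the time difference quotients of $\bw$, which converge uniformly to $\partial_t\bw$ by uniform continuity of $\partial_t\bw$ on $\bar\Omega\times[0,T]$; combined with $\bPsi\in C(0,T;C^{1,\alpha}(\bar\Omega))$ this yields the stated $C^1(\bar\Omega\times[0,T])$ regularity without ever differentiating $\bw$ twice.
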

\begin{proof}
    This follows from results of \cite{von2006necessary} and especially 
    \cite{kress1969grundzuege}. Note that our assumption of simply-connectedness 
    means that the domains $\Omega$ have no handles or, equivalently, first Betti number equal to zero.
    Theorems 5.1 and 5.2 of \cite{kress1969grundzuege} state that, for any divergence-free 
    $\bw\in C^\alpha(\bar{\Omega})$ for some $0<\alpha<1$ satisfying $\bw\bdot\bn=0,$ there exists 
    $\boldsymbol\Psi\in C^{1,\alpha}(\bar\Omega)$ which 
    solves \eqref{eq:div-curl-3d} and which is unique subject to the additional constraint that 
    $\int_{\partial\Omega} \bPsi\bdot\bn \, dA=0.$
     Considering $\bw\in C^1(\bar\Omega\times[0,T]),$ we may 
    apply this result for every time $t\in[0,T]$ and conclude by stability that $\boldsymbol\Psi\in C(0,T;C^{1,\alpha}(\bar\Omega))\cap C^1(\bar\Omega\times[0,T])$.
\end{proof}

Now let $\chi:[0,\infty)\to\mathbb{R}$ be $C^{\infty}$-smooth cutoff function s.t. 
$0\le\chi\le1$ and 
\begin{equation}
    \chi(s) = 
        \begin{cases}
            0 &\text{ if } s<1\\
            1 &\text{ if } s>2
        \end{cases}\label{def:smooth_cutoff}
\end{equation}
and let $\epsilon > 0$ and
\begin{equation}
    \bw_{\epsilon}(\bx, t) = 
    \grad\times\left(\chi\left(\frac{d(\bx) }{\epsilon}\right)\Psi(\bx, t)\right)
\end{equation}
where recall that the distance function $d$ is $C^\infty$ in a tubular neighborhood 
$\Omega_{\eta}$ for some $\eta>0.$ Hence, $\bw_{\epsilon}\in C^1(0,T; C_c^1(\Omega))$ and $\partial_t\bw_\epsilon\in  C(0,T; C_c^1(\Omega))$ for sufficiently small $\epsilon>0$ and $\bw_\epsilon$ satisfies \eqref{key}:
\begin{align}
    \frac{d}{dt}\int_{\Omega}\bu\cdot\bw_{\epsilon} \, dV  
    &= \int_{\Omega}(S(\bw_{\epsilon})(\bu-\bw_{\epsilon})\cdot(\bu-\bw_{\epsilon})
    -E(\bw_{\epsilon})\cdot\bu)\, dV \label{key_eps}\\
    &= \int_{\Omega}[\partial_t\bw_{\epsilon}\cdot\bu 
        + (\bu\cdot\grad\bw_{\epsilon})\cdot\bu 
        - ((\bu-\bw_{\epsilon})\cdot\grad\bw_{\epsilon})\cdot\bw_{\epsilon}]\, dV \\ 
    &= \int_{\Omega}[\partial_t\bw_{\epsilon}\cdot\bu 
        + (\bu\cdot\grad\bw_{\epsilon})\cdot\bu ]\, dV           
        \label{key_eps_equiv}
\end{align}
In the last step, we used $\bu-\bw_{\epsilon}\in H(\Omega)$, so that it 
follows from \eqref{eq:characterization-H-Omega-1} that
\begin{equation}
    \int_{\Omega}((\bu-\bw_{\epsilon})\cdot\grad\bw_{\epsilon})\cdot\bw_{\epsilon} \, dV  
    = \int_{\Omega}(\bu-\bw_{\epsilon})\cdot\frac{1}{2}\grad|\bw_{\epsilon}|^2\, dV  = 0\label{eq:conv-end}
\end{equation}
The same simplifications apply also to \eqref{key} with $\bw_\epsilon$ replaced by $\bw$
and this alternative form of \eqref{key} analogous to \eqref{key_eps_equiv} is most convenient 
to apply Lemma \ref{lemma1}.

To this end, we want to send $\epsilon\to0$ in \eqref{key_eps_equiv} to recover \eqref{key}. 
By definition, we can rewrite $\bw_\epsilon$ as follows:
\begin{equation}
    \bw_{\epsilon} = \chi\left(\frac{d(\bx) }{\epsilon}\right)\grad\btimes \bPsi 
    + \frac{1}{\epsilon}\chi'\left(\frac{d(\bx) }{\epsilon}\right)\grad d\btimes \bPsi\label{w_eps}
\end{equation}
Furthermore, given that $\bPsi\in C^1(\bar\Omega\times[0,T])$ and $\bn\btimes\bPsi|_{\partial\Omega} = \bzed$, 
there exists a constant $C$ such that
\begin{equation}
    |\bn(\bx)\btimes\bPsi(\bx, t)| \le Cd(\bx) \le C\epsilon
\end{equation}
for all $\bx\in\overline{\Omega}_{2\epsilon}$. 
Together with the observations that the support of $\chi'\left(\frac{d(\bx) }{\epsilon}\right)$ 
is contained in $(\epsilon,2\epsilon)$ and $|\grad d| = |\bn| = 1$, we obtain convergence in $L^{\infty}(0,T;L^2(\Omega))$ of the second term in \eqref{w_eps} to zero, and thus we get
\begin{eqnarray}
    \bw_{\epsilon} \to \bw \text{ strongly in } L^{\infty}([0,T];L^2(\Omega)), \;\; 
    \text{as } \epsilon\to0\label{w-L2Conv}\\
    \partial_t\bw_{\epsilon} \to \partial_t\bw \text{ strongly in } L^{\infty}([0,T];L^2(\Omega)), \;\; 
    \text{as } \epsilon\to0\label{time-derivative-L2Conv}
\end{eqnarray}
Subsequently for the terms involving time derivative in \eqref{key_eps} and \eqref{key_eps_equiv}, it follows from \eqref{w-L2Conv} and \eqref{time-derivative-L2Conv} respectively that 
\begin{eqnarray}
    \frac{d}{dt}\int_{\Omega}\bu\cdot\bw_{\epsilon}\, dV \to\frac{d}{dt}\int_{\Omega}\bu\cdot\bw \, dV \label{eq:conv-start}\\
    \int_{\Omega}\partial_t\bw_{\epsilon}\cdot\bu \, dV \to \int_{\Omega}\partial_t\bw\cdot\bu \, dV 
\end{eqnarray}
in the sense of distribution in time. Now it remains to show that
\begin{equation}
    \int_{\Omega}(\bu\cdot\grad\bw_{\epsilon})\cdot\bu \, dV  \to \int_{\Omega}(\bu\cdot\grad\bw)\cdot\bu \, dV  
\end{equation}
in the sense of distribution in time, as $\epsilon\to0$. Here we need to perform a local analysis in the region near the boundary. For $n=3$, $\partial\Omega$ is a 2-dimensional smooth manifold without boundary.
As a consequence of Poincaré-Hopf theorem, however,
there does not exist any tangent vector on $\partial\Omega$ that is non-vanishing everywhere, thus a global parametrization of the boundary is impossible.
To resolve this issue, we look at a subset of $\Omega_{2\epsilon}$ so that 
there exists a well-defined local coordinate in terms of tangent vectors and normal vectors.
Consider some point $\bx^0\in\partial\Omega$ and some $0< r < 2\epsilon$. 
Let $\Omega_{2\epsilon}^0 = B(\bx^0, r)\cap\Omega$. 
Then for every $\bx\in\Omega_{2\epsilon}^0$, let $\hat{\bx} = \pi(\bx) $, 
where $\pi:\Omega_{2\epsilon}^0\to\partial\Omega$ is the smooth projection map 
for sufficiently small $\epsilon$. 
Since $\partial\Omega$ is $C^\infty$, there exist tangent vectors $\btau^0, \btau^1$ on $\partial\Omega$ such that 
$(\btau^0,\btau^1,\bn)$ is a $C^\infty$ smooth orthogonal frame on 
$\partial\Omega\cap\partial\Omega_{2\epsilon}^0$. Here indices $0$ and $1$ belong to $(\mathbb{Z}_2, +)$ such that $0+1 = 1+0 = 1$ and $1+1 = 0$. With this index notation,
$\btau^0\times\btau^1 = \bn$ and $\btau^i\times\bn = (-1)^{i+1}\btau^{i+1}$.
For $\bx\in\Omega_{2\epsilon}^0$, we denote $u_{\tau}^i(\bx)  = \bu(\bx) \bdot\btau^i(\hat{\bx})$ for $i=0,1$, $w_n(\bx) = \bw(\bx)\bdot\bn(\hat{\bx}),$ $\partial_{\tau}^i w_n(\bx)  = \grad w_{n}(\bx) \cdot\btau^{i}(\hat{\bx})$ and so on. 
Then we write
\beal
    &\int_{\Omega_{2\epsilon}^0}\bu\cdot\grad(\bw_{\epsilon}-\bw)\cdot\bu \, dV \\ 
    =& \int_{\Omega_{2\epsilon}^0}\partial_n(\bw_{\epsilon}-\bw)_n u_n u_{n}\, dV  
    +\int_{\Omega_{2\epsilon}^0}\partial_n(\bw_{\epsilon}-\bw)_{\tau}^0 u_{n} u_{\tau}^0\, dV 
    +\int_{\Omega_{2\epsilon}^0}\partial_n(\bw_{\epsilon}-\bw)_{\tau}^1 u_{n} u_{\tau}^1\, dV \\
    +&\int_{\Omega_{2\epsilon}^0}\partial_{\tau}^0(\bw_{\epsilon}-\bw)_n u_{\tau}^0 u_n\, dV  
    +\int_{\Omega_{2\epsilon}^0}\partial_{\tau}^0(\bw_{\epsilon}-\bw)_{\tau}^0 u_{\tau}^0 u_{\tau}^0\, dV 
    +\int_{\Omega_{2\epsilon}^0}\partial_{\tau}^0(\bw_{\epsilon}-\bw)_{\tau}^1 u_{\tau}^0 u_{\tau}^1\, dV \\
    +&\int_{\Omega_{2\epsilon}^0}\partial_{\tau}^1(\bw_{\epsilon}-\bw)_n u_{\tau}^1 u_n\, dV  
    +\int_{\Omega_{2\epsilon}^0}\partial_{\tau}^1(\bw_{\epsilon}-\bw)_{\tau}^0 u_{\tau}^1 u_{\tau}^0\, dV 
    +\int_{\Omega_{2\epsilon}^0}\partial_{\tau}^1(\bw_{\epsilon}-\bw)_{\tau}^1 u_{\tau}^1 u_{\tau}^1\, dV \\
    \eqqcolon & \quad
    I_{n,n} + I_{n,0} + I_{n,1} + 
    I_{0,n} + I_{0,0} + I_{0,1} + 
    I_{1,n} + I_{1,0} + I_{1,1}\label{eq:boundary-terms}
\eeal

We next compute all components and derivatives of $\bw_\epsilon - \bw$ using \eqref{w_eps}
\begin{align}
    (\bw_{\epsilon}-\bw)_{n} &= 
    \left(\chi\left(\frac{d}{\epsilon}\right)-1\right)w_{n}\label{derivatives1} \\
    (\bw_{\epsilon}-\bw)_{\tau}^i &= 
    \left(\chi\left(\frac{d}{\epsilon}\right)-1\right) w_{\tau}^i
    + \frac{1}{\epsilon}\chi'\left(\frac{d}{\epsilon}\right)(-1)^{i+1}\Psi_{\tau}^{i+1}\\
    \partial_{n}(\bw_{\epsilon}-\bw)_{n} &= 
     \frac{1}{\epsilon}\chi'\left(\frac{d}{\epsilon}\right) w_n 
    +\left(\chi\left(\frac{d}{\epsilon}\right)-1\right)\partial_n w_{n} \\
    \partial_{n}(\bw_{\epsilon}-\bw)_{\tau}^i &= 
    \frac{1}{\epsilon}\chi'\left(\frac{d}{\epsilon}\right)w_\tau^i + \left(\chi\left(\frac{d}{\epsilon}\right)-1\right)\partial_{n} w_{\tau}^i\\
    \nonumber
    &+ \frac{1}{\epsilon^2}\chi''\left(\frac{d}{\epsilon}\right)(-1)^{i+1}\Psi_{\tau}^{i+1}
    + \frac{1}{\epsilon}\chi'\left(\frac{d}{\epsilon}\right)(-1)^{i+1}\partial_n \Psi_{\tau}^{i+1}\\
    \partial_{\tau}^i(\bw_{\epsilon}-\bw)_{n} &= 
    \left(\chi\left(\frac{d}{\epsilon}\right)-1\right)\partial_{\tau}^i w_{n}\\
    \partial_{\tau}^j(\bw_{\epsilon}-\bw)_{\tau}^i &= 
    \left(\chi\left(\frac{d}{\epsilon}\right)-1\right)\partial_{\tau}^j w_{\tau}^i
    + \frac{1}{\epsilon}\chi'\left(\frac{d}{\epsilon}\right)
    (-1)^{i+1}\partial_{\tau}^{j}\Psi_{\tau}^{i+1}\label{derivatives2}
\end{align}
We list some observations useful for estimation of the various terms in \eqref{eq:boundary-terms}.
Recalling the assumption \eqref{cond:no-flow-through},  
wall-normal velocity $u_n$ vanishes uniformly as it approaches the boundary.
Moreover, since $\bw\in C^{1}(\Bar{\Omega}\times[0,T])$ 
and $w_{n} = 0$ on $\partial\Omega$, 
there is likewise a constant independent of $t$ such that 
\begin{align}
    |w_n(\bx) |\le Cd(\bx)  \le C\epsilon \;\;\;\; &\text{in } \Omega_{2\epsilon}^0
\end{align}
Similarly, the fact that $\bPsi\in C([0,T];C^{1,\alpha}(\Bar{\Omega}))$ 
and $\bn\btimes\bPsi|_{\partial\Omega}\equiv \bzed$ implies that
\begin{align}
    &\Psi_k \equiv 0,\ \partial_{\tau}^j \Psi_k \equiv 0, \;\;\;\; \text{on } \partial\Omega\\
    &|\Psi_k(\bx) |\le Cd(\bx) \le C\epsilon,\ |\partial_{\tau}^j \Psi_k(\bx) | \le Cd(\bx)^\alpha \le C\epsilon^\alpha \;\;\;\; 
    \text{in } \Omega_{2\epsilon}^0
\end{align}
for all $k,j\in\{0,1\}$, 
where $\Psi_0 = \Psi_{\tau}^0$ and $\Psi_1 = \Psi_{\tau}^1$ and $\Psi_2 = \Psi_n$.
Finally, note that $\bPsi,\bw,\bu$ are uniformly bounded on $\Omega_{2\epsilon}^0$ for small $\epsilon<\delta$, 
and that there is a constant independent of $\epsilon$ such that $|\Omega_{2\epsilon}^0|\le C\epsilon$.

With these observations above, 
we use \eqref{derivatives1}-\eqref{derivatives2} to get the following uniform-in-time estimates
\beal
    |I_{n,n}|&\le\frac{1}{\epsilon}\int_{\Omega_{2\epsilon}^0}|u_n|^2\norm{\chi'}_{\infty}\norm{w_n}_{L^\infty(\Omega_{2\epsilon})}\, dV \\
    &\qquad\qquad + \int_{\Omega_{2\epsilon}^0}|u_n|^2\norm{\chi-1}_{\infty}\norm{\partial_n w_n}_{\infty}\, dV \\
    &\le C\norm{u_n}_{L^{\infty}(\Omega_{2\epsilon})}^2(\epsilon+\epsilon)\to0
\eeal
\beal
    |I_{n,i}|&\le
    \frac{1}{\epsilon}\int_{\Omega_{2\epsilon}^0}|u_n||u_\tau^i|\norm{\chi'}_{\infty}|w_\tau^i|\;\, dV + 
    \int_{\Omega_{2\epsilon}^0}|u_n||u_{\tau}^i|\norm{\chi-1}_{\infty}\norm{\partial_nw_{\tau}}_{\infty}\, dV  \\
    &\qquad\qquad+ \frac{1}{\epsilon^2}\int_{\Omega_{2\epsilon}^0}|u_n||u_{\tau}^i|\norm{\chi''}_{\infty}|\Psi_\tau|\, dV 
    +\frac{1}{\epsilon}\int_{\Omega_{2\epsilon}^0}|u_n||u_{\tau}^i|\|\chi'\|_{\infty}\left|\frac{\partial \Psi_\tau}{\partial n}\right|\, dV\\
    &\le C\norm{u_n}_{L^{\infty}(\Omega_{2\epsilon})}(1 + \epsilon+1 + 1)\to0
\eeal

\begin{align}
    |I_{i,n}|\le\int_{\Omega_{2\epsilon}^0}|u_n||u_{\tau}^i|\norm{\chi-1}_{\infty}\norm{\partial_{\tau}^i w_{n}}_{\infty}\, dV \le C\epsilon\norm{u_n}_{L^{\infty}(\Omega_{2\epsilon})}\to0
\end{align}

\beal
    |I_{i,j}|&\le \int_{\Omega_{2\epsilon}^0}|u_{\tau}^i||u_{\tau}^j||\chi-1|_{\infty}\norm{\partial_{\tau}w_{\tau}}_{\infty}\, dV  \\
    &\qquad\qquad + \frac{1}{\epsilon}\int_{\Omega_{2\epsilon}^0}|u_{\tau}^i||u_{\tau}^j|\norm{\chi'}_{\infty}\max_{k\in\{0,1\}}|\partial_{\tau}^j \Psi_k|\, dV \\
    &\le C\epsilon + C\epsilon^\alpha\to0
\eeal 
This shows that as $\epsilon\to0$,
\begin{align}
    \int_{\Omega_{2\epsilon}^0}|\bu\cdot\grad(\bw_{\epsilon}-\bw)\cdot\bu|\;\, dV  \to0
\end{align}

Since $\partial\Omega$ is compact, there exist finitely many points $\bx^i\in\partial\Omega$ for $i=1,\dots,N$ 
such that 
\begin{align}
    \Omega_{2\epsilon} = \cup_{i=1}^N\Omega_{2\epsilon}^i
\end{align}
By the same argument as that for $\Omega_{2\epsilon}^0$, we can show that
\begin{align}
    \int_{\Omega_{2\epsilon}^i}|\bu\cdot\grad(\bw_{\epsilon}-\bw)\cdot\bu|\;\, dV  \to0
\end{align}
for all $i=1,\dots,N$. Therefore, we complete the proof with
\begin{align}
    \int_{\Omega}\bu\cdot\grad(\bw_{\epsilon}-\bw)\cdot\bu\;\, dV  \to0
\end{align}
The proof for $n=2$ is similar but even simpler. 

\section*{Acknowledgements} 
We thank T. Drivas for very helpful discussions. 
G.E. thanks also the Department of Physics of the University of Rome `Tor Vergata' 
for hospitality while this work was finalized. 

\section*{Declarations}

\subsection*{Funding}
This work was funded by the Simons Foundation, Targeted Grant MPS-663054 and Collaboration Grant MPS-1151713. G.E. also acknowledges support from the European Research Council (ERC) under the European Union’s Horizon 2020 research and innovation program (Grant Agreement No. 882340).

\subsection*{Conflicts of Interest/Competing Interests}
The authors declare that they have no conflict of interest or competing interests.

\subsection*{Data Availability}
This work does not have associated data.

\bibliographystyle{plain}
\bibliography{bibliography}
\end{document}